\newtheorem{lemma}{Lemma}[section]
\newtheorem{theorem}{Theorem}[section]
\newenvironment{remark}{\begin{description} \item[Remark.] }%
{\end{description}}
{\end{description}}
\newcommand{\be}{\begin{equation}}
\newcommand{\ee}{\end{equation}}
\begin{document}
    \title{A high-order combined interpolation/finite element technique for evolutionary coupled groundwater-surface water problem}
   \author{\Large{Eric Ngondiep, Areej A. Binsultan, Ibtisam M. Aldawish,}
       \thanks{Correspondence to:\ ericngondiep@gmail.com/engondiep@imamu.edu.sa (E. Ngondiep), aabinsultan@imamu.edu.sa (A. A. Binsultant).\ }}
   \date{\small{Department of Mathematics and Statistics, College of Science, Imam Mohammad Ibn Saud Islamic University (IMSIU), 90950 Riyadh, Saudi Arabia}}

    \maketitle
   \textbf{Abstract.}
    A high-order combined interpolation/finite element technique is developed for solving the coupled groundwater-surface water system that governs flows in karst aquifers. In the proposed high-order scheme we approximate the time derivative with piecewise polynomial interpolation of second-order and use the finite element discretization of piecewise polynomials of degree $d$ and $d+1$, where $d \geq 2$ is an integer, to approximate the space derivatives. The stability together with the error estimates of the constructed technique are established in $L^{\infty}(0,T;\text{\,}L^{2})$-norm. The analysis suggests that the developed computational technique is unconditionally stable, temporal second-order accurate and convergence in space of order $d+1$. Furthermore, the new approach is faster and more efficient than a broad range of numerical methods discussed in the literature for the given initial-boundary value problem. Some examples are carried out to confirm the theoretical results.\\
    \text{\,}\\
   \ \noindent {\bf Keywords: evolutionary coupled groundwater-surface water problem, interpolation technique, finite element method, a hybrid higher-order unconditionally stable interpolation/finite element approach, stability analysis and error estimates.} \\
   \\
   {\bf AMS Subject Classification (MSC). 65M10, 65M05}.

      \section{Introduction and preliminaries}\label{sec1}

      \text{\,\,\,\,\,\,\,\,\,\,} The coupled surface flow and porous media flow with interface conditions arises in many applications in applied sciences and engineering including: fluid flow in porous media \cite{7lgzc}, subsurface flow systems \cite{15lgzc,26lgzc}, filtration problems in engineering \cite{37lgzc}, etc... The macroscopic properties of filtration processes are described using the coupled groundwater-surface water model. These processes find several applications in porous media problems such as: oil filtering through rocks/sand and water flowing across semi-permeable soil. The mixed Stokes-Darcy consists of Stokes equations and Darcy's law to govern the flow in the conduits and porous media, respectively, together with the interface conditions to couple both flows \cite{15lgzc,en1,16lgzc,en2}. A large class of numerical methods for solving complex ordinary/partial differential equations (ODEs/PDEs) \cite{en,en4} such as the mixed Stokes-Darcy model have been developed at a moment when it was prohibited to use the Navier-Stokes equations \cite{en3} for modeling many problems because of CPU required and large computer memory. In this paper, we focus on the coupled unsteady groundwater-surface water problem. As literature on numerical methods and applications, interested readers can consult \cite{8en1,7en1,2en1,5en1,3en1,6en1,4en1} and references therein.

       \begin{figure}
         \begin{center}
          \begin{tabular}{c}
          \psfig{file=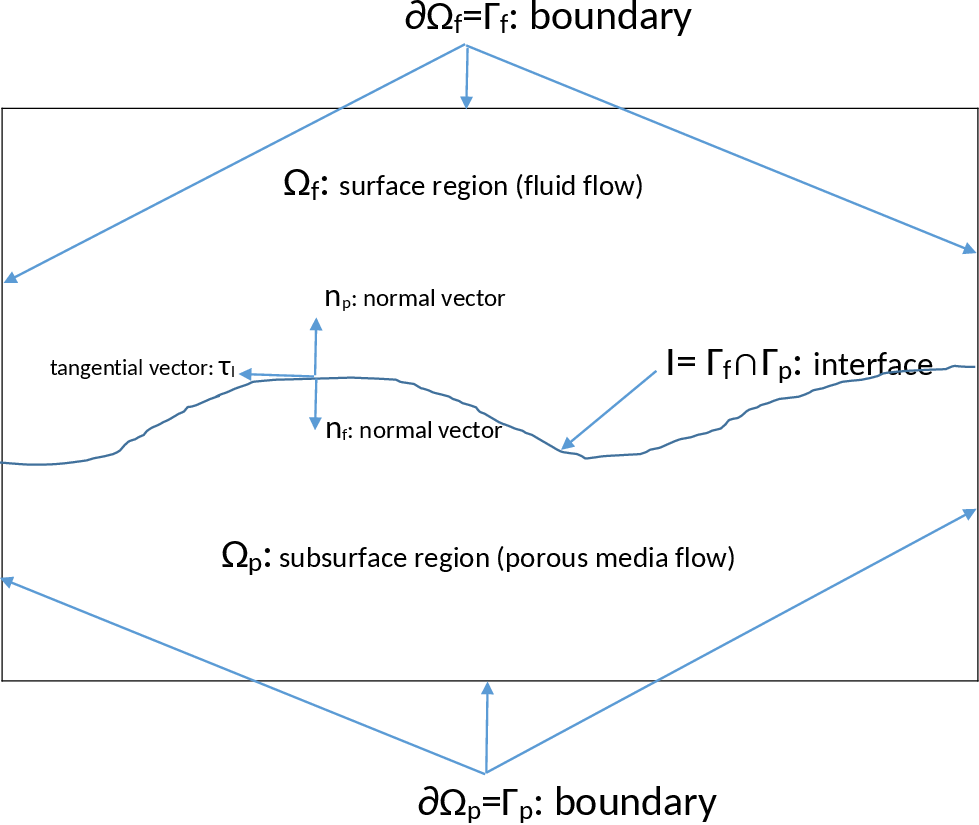,width=8cm}\\
         \end{tabular}
        \end{center}
         \caption{Configuration of a coupled groundwater-surface water flow separated with an interface I}
          \label{fig1}
          \end{figure}

      We consider a fluid flow in domain $\Omega_{f}\subset\mathbb{R}^{m}$ coupled with a porous media flow in region $\Omega_{p}\subset\mathbb{R}^{m}$, where $m=2$ or $3$, and lie across an interface $I$ from each other, where $\Omega_{s}$, $s\in\{f,p\}$, are bounded domains satisfying: $\Omega_{f}\cap\Omega_{p}=\emptyset$ and $\overline{\Omega}_{f}\cap\overline{\Omega}_{p}=I$ (see Figure $\ref{fig1}$). Set $\overline{\Omega}=\overline{\Omega}_{f}\cup\overline{\Omega}_{p}$ and let $n_{f}$ and $n_{p}$ be the unit outward normal vectors on $\Gamma_{f}=\partial\Omega_{f}$ (boundary of $\Omega_{f}$) and $\Gamma_{p}=\partial\Omega_{p}$ (boundary of $\Omega_{p}$), respectively. Denote $\tau_{l},$ $l=1,2,...,m-1,$ be the unit tangential vectors on the interface coupling $I$. We remind that $n_{f}=-n_{p}$.\\

       The fluid velocity $v$, fluid pressure $p$, and hydraulic head $\phi$ satisfy the following equations \cite{9en1}
        \begin{equation}\label{1}
     v_{t}-\nu\Delta v+ \nabla p=f(x,t), \text{\,\,\,\,\,in\,\,\,}\Omega_{f}\times[0,\text{\,}T],
     \end{equation}
     \begin{equation}\label{2}
     \nabla\cdot v = 0, \text{\,\,\,\,\,in\,\,\,}\Omega_{f}\times[0,\text{\,}T],
     \end{equation}
     \begin{equation}\label{3}
     S_{0}\phi_{t}-\nabla\cdot (\mathcal{K}\nabla\phi)=g_{0}(x,t), \text{\,\,\,\,\,in\,\,\,}\Omega_{p}\times[0,\text{\,}T],
     \end{equation}
     \begin{equation}\label{4}
     v_{p} =-\frac{1}{\eta}\mathcal{K}\nabla\phi, \text{\,\,\,\,\,in\,\,\,}\Omega_{p}\times[0,\text{\,}T],
     \end{equation}
     subject to initial conditions
     \begin{equation}\label{5}
      v(x,0)=v_{0}(x), \text{\,\,\,for\,\,}x\in\Omega_{f} \text{\,\,\,\,\,and\,\,\,\,\,} \phi(x,0)=\phi_{0}(x), \text{\,\,\,for\,\,}x\in\Omega_{p},
      \end{equation}
      and boundary conditions
      \begin{equation}\label{6}
      v(x,t)=0, \text{\,\,\,on\,\,}(\Gamma_{f}\setminus I)\times[0,\text{\,}T] \text{\,\,\,\,\,and\,\,\,\,\,} \phi(x,t)=0, \text{\,\,\,on\,\,}(\Gamma_{p}\setminus I)\times[0,\text{\,}T],
     \end{equation}
     \begin{equation*}
                   + \text{\,\,coupling\,\,\,conditions\,\,\,across\,\,}I,
     \end{equation*}
    where $T>0$ is a constant. The interface conditions are the conservation of mass, balance of normal forces and Beavers-Joseph-Saffman condition on the tangential vectors given in \cite{30jkmt,32jkmt} by
    \begin{equation}\label{7}
        v^{T}n_{f}=\frac{1}{\eta}\mathcal{K}(\nabla\phi)^{T}n_{p}, \text{\,\,\,\,\,\,\,\,on\,\,}I,
    \end{equation}
    \begin{equation}\label{8}
        p-\nu n_{f}^{T}(\nabla v)n_{f}=\rho g\phi, \text{\,\,\,\,\,\,\,\,on\,\,}I,
    \end{equation}
    \begin{equation}\label{10}
     \nu(\tau_{l})^{T}\frac{\partial v}{\partial n_{f}}=-\frac{\alpha}{\sqrt{\tau_{l}^{T}\mathcal{K}\tau_{l}}}v^{T}\tau_{l},\text{\,\,\,\,\,\,\,\,on\,\,}I,
    \end{equation}
     for $l=1,...,m-1$, where $u^{T}$ designates the transpose of the vector $u$. Here: $\nu$, $v$, $p$ and $f$ denote the kinematic viscosity, Stokes velocity of the fluid flow in $\Omega_{f}$, kinetic pressure and external force, respectively, while $\eta$, $S_{0}$, $\phi$, $\mathcal{K}$, $g_{0}$ and $v_{p}$ represent the volumetric porosity, specific mass storativity, piezometric head, hydraulic conductivity tensor, source term and fluid velocity in $\Omega_{p}$, respectively. Furthermore, $v_{t}$ designates $\frac{\partial v}{\partial t}$, $\Delta$ and $\nabla$ mean the laplacian and gradient operators, respectively. $\rho$ denotes the fluid density whereas $g$ represents the gravitational acceleration. We assume that $\mathcal{K}\in\mathcal{M}_{m} (\mathcal{C}(\overline{\Omega}_{p}))$ (space of $m\times m$ matrices with elements in $\mathcal{C}(\overline{\Omega}_{p})$ which is the space of continuous functions over the compact domain $\overline{\Omega}_{p}$), is a non diagonal symmetric positive definite matrix. This indicates that the porous media is nonhomogeneous.\\

      In this paper, we construct a hybrid higher-order unconditionally stable approach for solving the evolutionary coupled groundwater-surface water problem $(\ref{1})$-$(\ref{6})$. As discussed in \cite{en,12en1}, there are many reasons that have motivated several researchers to develop efficient approaches for decoupling multiphysic coupled problems and thus derive single model solvers that could be applied with little extra computational and software overhead. However, some authors \cite{13en1,12en1,3en1} have constructed decoupled schemes for the stationary Stokes-Darcy model whereas the evolutionary groundwater-surface water problem has been deeply analyzed by the use of iterative and non-iterative solvers \cite{15en1,en3,14en1,jkmt,en4}. The numerical method we propose in a computed solution of the initial-boundary value problem $(\ref{1})$-$(\ref{6})$ is a three-level scheme motivated by the form of coupling. The time derivative is approximated by a second order interpolation technique while we approximate the space derivatives using the finite element method. The obtained computational scheme, so called a hybrid higher-order interpolation/finite element approach is unconditionally stable without the addition of stabilization terms (see for example the discussion in \cite{jkmt} regarding the addition of stabilization terms), temporal second-order accurate and spatial convergent with order $O(h^{d+1})$, where $h$ denotes the mesh space and $d$ is a positive integer. Additionally, because of its unconditional stability and higher-order accuracy it's a satisfactory computational technique for solving high Reynolds number flow where the elliptic interaction region can be compared to subsonic region thickness. This should lead to small time steps and thus higher computational costs if explicit numerical schemes, hybrid methods without stabilization terms or first-order implicit procedures are used such as: MacCormack approach \cite{1en1}, Leap-frog/Crank-Nicolson formulation \cite{5jkmt}, decoupled/decoupling techniques \cite{5en1,26jkmt} and MacCormack rapid solver \cite{en5}. Furthermore, the developed higher-order numerical approach is easy to implement, faster and more efficient than a large class of numerical schemes developed in the literature \cite{11zd,5en1,en3,qhpl,11zd} for solving the initial-boundary value problem $(\ref{1})$-$(\ref{6})$. In addition, the new algorithm overcomes the drawbacks (severe stability restriction, lower accuracy, no stabilization term, etc...) raised by the methods mentioned above. The highlights of this work are the following.
      \begin{description}
        \item[(i)] Construction of a hybrid higher-order unconditionally stable interpolation/finite element technique in an approximate solution of the evolutionary groundwater-surface water problem $(\ref{1})$-$(\ref{6})$.
        \item[(ii)] Analysis of unconditional stability and error estimates of the proposed approach.
        \item[(iii)] Numerical experiments to confirm the theoretical studies and to demonstrate the efficiency and validity of the new algorithm.
      \end{description}

     In the remainder of the paper we proceed as follow: Section $\ref{sec2}$ deals with a detailed description of a hybrid higher-order interpolation/finite element formulation for solving the initial-boundary value problem $(\ref{1})$-$(\ref{6})$. In Section $\ref{sec3}$, we analyze the unconditional stability and error estimates of the constructed computational technique while some numerical examples are carried out in Section $\ref{sec4}$. Section $\ref{sec5}$ presents the general conclusion together with our future works.

     \section{Construction of the higher-order computational approach}\label{sec2}

     \text{\,\,\,\,\,\,\,\,\,\,} This section develops a hybrid higher-order interpolation technique combined with finite element formulation for solving the time-dependent mixed Stokes-Darcy equations $(\ref{1})$-$(\ref{4})$ subject to initial-boundary conditions $(\ref{5})$-$(\ref{6})$ along with the coupling conditions across interface $I$. We establish the variational formulation of the initial-boundary value problem $(\ref{1})$-$(\ref{6})$ and we describe the discrete weak formulation of the new hybrid higher-order approach.\\

       We consider the Sobolev spaces
       \begin{equation*}
        V_{f}=\{u\in\left[H^{1}(\Omega_{f})\right]^{m}:\text{\,\,\,}u=0\text{\,\,on\,\,}\Gamma_{f}\setminus I\},
        \text{\,\,\,\,\,\,\,\,\,\,\,\,\,\,}V_{p}=\{\psi\in H^{1}(\Omega_{p}):\text{\,\,\,}\psi=0\text{\,\,on\,\,}\Gamma_{p}\setminus I\},
       \end{equation*}
        \begin{equation*}
       V=V_{f}\times V_{p} \text{\,\,\,\,\,\,and\,\,\,\,\,\,\,} Q=L_{0}^{2}(\Omega_{f})=\{q\in L^{2}(\Omega_{f}),\text{\,\,}\int_{\Omega}qdx=0\}.
       \end{equation*}

       The spaces $V_{s}$, for $s\in\{f,\text{\,}p\}$, are endowed with the $L^{2}$-scalar products $\left(\cdot,\cdot\right)_{0}$ and $\left(\cdot,\cdot\right)_{s}$, and the norms $\|\cdot\|_{0}$ and $\|\cdot\|_{s}$, while the space $L^{2}(I)$ is equipped with the $L^{2}$-inner product $\left(\cdot,\cdot\right)_{I}$ and $L^{2}$-norm $\|\cdot\|_{I}$. These scalar-products and norms are defined as:
       \begin{equation*}
       \left(u,v\right)_{0}=\int_{\Omega_{f}}u^{T}v dx,\text{\,\,\,}\left(u,v\right)_{f}=\int_{\Omega_{f}}\nabla u:\nabla vdx,
       \text{\,\,\,}\|u\|_{0}=\sqrt{\left(u,u\right)_{0}},\text{\,\,\,}\|u\|_{f}=\sqrt{\left(u,u\right)_{f}},\text{\,\,\,}\forall u,v\in V_{f},
       \end{equation*}
       \begin{equation*}
       \left(\psi,\phi\right)_{0}=\int_{\Omega_{p}}\psi\phi dx,\text{\,\,\,}\left(\psi,\phi\right)_{p}=\int_{\Omega_{p}}(\nabla \psi)^{T}\nabla \phi dx, \text{\,\,\,} \|\psi\|_{0}=\sqrt{\left(\psi,\psi\right)_{0}},\text{\,\,\,}\|\psi\|_{p}=\sqrt{\left(\psi,\psi\right)_{p}},\text{\,\,\,}\forall \psi,\phi\in V_{p},
       \end{equation*}
       \begin{equation}\label{11}
       \left(u_{1},u_{2}\right)_{I}=\int_{I}(u_{1}n_{f})^{T}(u_{2}n_{f})dI,\text{\,\,\,}\|u_{1}\|_{I}=\sqrt{\left(u_{1},u_{1}\right)_{I}},\text{\,\,\,}\forall u_{1},u_{2}\in L^{2}(I),
       \end{equation}
        where the symbol ":" means the scalar product on the space of square matrices $\mathcal{M}_{m}(L^{2}(\Omega_{f}))$ defined as, for $M_{1}=(m_{1})_{ij},M_{2}=(m_{2})_{ij}\in\mathcal{M}_{m}(L^{2}(\Omega_{f}))$
       \begin{equation}\label{14a}
       M_{1}:M_{2}=\underset{i=1}{\overset{m}\sum}\underset{j=1}{\overset{m}\sum}\int_{\Omega_{f}}(m_{1})_{ij}(m_{2})_{ij}dx.
       \end{equation}

       In addition, the Sobolev space $V=V_{f}\times V_{p}$, is endowed with the scalar-products $\left(\cdot,\cdot\right)_{\bar{0}}$ and $\left(\cdot,\cdot\right)_{\nabla}$, defined as
       \begin{equation}\label{12}
       \left((u,\phi),(v,\psi)\right)_{\bar{0}}=\eta\left(u,v\right)_{0}+\rho gS_{0}\left(\phi,\psi\right)_{0},\text{\,\,}\left((u,\phi),(v,\psi)\right)_{\nabla}=
       \eta\nu\left(u,v\right)_{f}+\rho g k_{\max}\left(\phi,\psi\right)_{p},\text{\,\,}\forall (u,\phi),(v,\psi)\in V,
       \end{equation}
       and the corresponding norms $\|\cdot\|_{\bar{0}}$ and $\|\cdot\|_{\nabla}$, are given by
       \begin{equation}\label{13}
       \|(u,\phi)\|_{\bar{0}}=\sqrt{\eta\|u\|_{0}^{2}+\rho gS_{0}\|\phi\|_{0}^{2}},\text{\,\,\,}\|(u,\phi)\|_{\nabla}=\sqrt{\eta\nu\|u\|_{f}^{2}+\rho gk_{\max}\|\phi\|_{p}^{2}},\text{\,\,\,}\forall (u,\phi)\in V.
       \end{equation}

        Here, $0<k_{\max}=\max\left\{\underset{x\in\overline{\Omega}_{p}}{\max}\lambda_{l}(x),\text{\,\,for\,\,}l=1,2,...,m\right\}<\infty$, where $\lambda_{l}$ are the eigenvalues of the hydraulic tensor $\mathcal{K}$ given in Section $\ref{sec1}$. Indeed, $\mathcal{K}\in M_{m}(\mathcal{C}(\overline{\Omega}_{p}))$ is a symmetric positive definite matrix, so $\mathcal{K}$ is diagonalizable and its eigenvalues $\lambda_{l}\in\mathcal{C}(\overline{\Omega}_{p})$ satisfy $\lambda_{l}(x)>0$, for every $x\in\overline{\Omega}_{p}$. Since the formulas can become quite heavy, for the convenience of writing, we assume that the parameters: $\nu$, $\eta,$ $\rho,$ $S_{0}$ and $\alpha$ are constants. Additionally, to ensure the existence and uniqueness of a smooth solution to the evolutionary groundwater-surface water problem $(\ref{1})$-$(\ref{4})$ and for the sake of discretization, we suppose that the external force, source term and initial conditions: $f$, $g_{0}$, $v_{0}$ and $\phi_{0}$, respectively, are regular enough. The following formulas obtained by integration by parts will play an important role in our analysis
       \begin{equation}\label{14}
       \int_{\Omega_{f}}u^{T}\Delta vdx=-\int_{\Omega_{f}}\nabla u:\nabla vdx+\int_{\Gamma_{f}}u^{T}(\nabla v)n_{f}d\Gamma_{f},\text{\,\,\,}
       \int_{\Omega_{p}}\psi\nabla\cdot\nabla\phi dx=\int_{\Gamma_{p}}\psi(\nabla \phi)^{T}n_{p}d\Gamma_{p}-\int_{\Omega_{p}}(\nabla\psi)^{T}\nabla\phi dx,
       \end{equation}
       for every $u\in[H^{1}(\Omega_{f})]^{m}$, $v\in[H^{2}(\Omega_{f})]^{m}$, $\psi\in H^{1}(\Omega_{p})$ and $\phi\in H^{2}(\Omega_{p})$, where $"\nabla\cdot"$ is the divergence operator,
       $n_{s}$, for $s=f,p$, are the unit outward normal vectors on $\Gamma_{s}$. Now, multiplying both equations $(\ref{1})$ and $(\ref{3})$ by $u\in V_{f}$ and $\psi\in V_{p}$, respectively, integrating over the domains $\Omega_{f}$ and $\Omega_{p}$, we obtain
       \begin{equation*}
       \int_{\Omega_{f}}u^{T}\partial_{t} vdx-\int_{\Omega_{f}}\nu u^{T}\Delta vdx+\int_{\Omega_{f}}u^{T}\nabla p dx=\int_{\Omega_{f}}u^{T}fdx,\text{\,\,}
       \int_{\Omega_{p}}S_{0}\psi\partial_{t}\phi dx-\int_{\Omega_{p}}\nabla\cdot(\mathcal{K}\nabla\phi)\psi dx=\int_{\Omega_{p}}g_{0}\psi dx.
       \end{equation*}

       Utilizing the integration by parts $(\ref{14})$, these equations are equivalent to
       \begin{equation}\label{15}
       \int_{\Omega_{f}}u^{T}\partial_{t} vdx-\int_{\Gamma_{f}}\nu u^{T}(\nabla v)n_{f}d\Gamma_{f}+\int_{\Omega_{f}}\nu\nabla u:\nabla vdx+\int_{\Gamma_{f}}pu^{T}n_{f}d\Gamma_{f}-\int_{\Omega_{f}}p\nabla\cdot udx=\int_{\Omega_{f}}u^{T}fdx,
       \end{equation}
       \begin{equation}\label{16}
       \int_{\Omega_{p}}S_{0}\psi\partial_{t}\phi dx-\int_{\Gamma_{p}}\psi n_{p}^{T}\mathcal{K}\nabla\phi d\Gamma_{p}+\int_{\Omega_{p}}(\nabla\psi)^{T}\mathcal{K}\nabla\phi dx=\int_{\Omega_{p}}g_{0}\psi dx.
       \end{equation}

       Equation $(\ref{15})$ can be rewritten as
       \begin{equation}\label{17}
       \int_{\Omega_{f}}u^{T}\partial_{t} vdx+\int_{\Omega_{f}}\nu\nabla u:\nabla vdx-\int_{\Omega_{f}}p\nabla\cdot udx+\int_{\Gamma_{f}}(pu^{T}n_{f}-
       \nu u^{T}(\nabla v)n_{f})d\Gamma_{f}=\int_{\Omega_{f}}u^{T}fdx.
       \end{equation}

       Since  $u=0$ on $\Gamma_{f}\setminus I$, and the set $\{n_{f},\tau_{l},\text{\,\,for\,\,}l=1,2,...,m-1\}$ forms a basis of $\mathbb{R}^{m}$, so $u$ can be expressed in the interface $I$ as $u=(u^{T}n_{f})n_{f}+\underset{l=1}{\overset{m-1}\sum}(u^{T}\tau_{l})\tau_{l}$. Additionally, $(\nabla v)n_{f}=\frac{\partial v}{\partial n_{f}}$, thus direct calculations give
       \begin{equation*}
       \int_{\Gamma_{f}}\nu u^{T}(\nabla v)n_{f}d\Gamma_{f}=\int_{I}\nu\left[(u^{T}n_{f})n_{f}^{T}(\nabla v)n_{f}+
       \underset{l=1}{\overset{m-1}\sum}(u^{T}\tau_{l})\tau_{l}^{T}\frac{\partial v}{\partial n_{f}}\right]dI.
       \end{equation*}

       Substituting this into equation $(\ref{17})$ and using equations $(\ref{8})$ and $(\ref{10})$, it is not hard to observe that
       \begin{equation*}
       \int_{\Omega_{f}}u^{T}\partial_{t} vdx+\int_{\Omega_{f}}\nu\nabla u:\nabla vdx-\int_{\Omega_{f}}p\nabla\cdot udx+\int_{I}u^{T}n_{f}(p-
       \nu n_{f}^{T}(\nabla v)n_{f})dI+
       \end{equation*}
       \begin{equation*}
       \underset{l=1}{\overset{m-1}\sum}\int_{I}\frac{\alpha}{\sqrt{\tau_{l}^{T}\mathcal{K}\tau_{l}}}(u^{T}\tau_{l})(v^{T}\tau_{l})dI=\int_{\Omega_{f}}u^{T}fdx,
       \end{equation*}
       which is equivalent to
       \begin{equation*}
       \int_{\Omega_{f}}u^{T}\partial_{t} vdx+\int_{\Omega_{f}}\nu\nabla u:\nabla vdx-\int_{\Omega_{f}}p\nabla\cdot udx+
       \underset{l=1}{\overset{m-1}\sum}\int_{I}\frac{\alpha}{\sqrt{\tau_{l}^{T}\mathcal{K}\tau_{l}}}(u^{T}\tau_{l})(v^{T}\tau_{l})dI
       +\int_{I}\rho g\phi u^{T}n_{f}dI=\int_{\Omega_{f}}u^{T}fdx.
       \end{equation*}

       Multiplying this equation by $\eta$, equation $(\ref{16})$ by $\rho g$, using equation $(\ref{7})$ along with the fact that $u=0$ on $\Gamma_{p}\setminus I$, and adding the obtained equations to get
       \begin{equation*}
       \eta\int_{\Omega_{f}}u^{T}\partial_{t} vdx+\rho g\int_{\Omega_{p}}S_{0}\psi\partial_{t}\phi dx+\eta\int_{\Omega_{f}}\nu\nabla u:\nabla vdx+
       \eta\underset{l=1}{\overset{m-1}\sum}\int_{I}\frac{\alpha}{\sqrt{\tau_{l}^{T}\mathcal{K}\tau_{l}}}(u^{T}\tau_{l})(v^{T}\tau_{l})dI+
       \end{equation*}
       \begin{equation}\label{18}
       \rho g\int_{\Omega_{p}}(\nabla\psi)^{T}\mathcal{K}\nabla\phi dx -\eta\int_{\Omega_{f}}p\nabla\cdot udx+\eta\rho g\int_{I}(\phi u^{T}n_{f}-\psi v^{T}n_{f})dI=\eta\int_{\Omega_{f}}u^{T}fdx+\rho g\int_{\Omega_{p}}g_{0}\psi dx.
       \end{equation}

        Using this equation, we consider the bilinear and linear operators: $B(\cdot,\cdot)$, $b(\cdot,\cdot)$, $b_{I}(\cdot,\cdot)$ and $F(\cdot)$ defined as: for $w=(v,\phi),$ $z=(u,\psi)\in V=V_{f}\times V_{p}$, and $q\in L_{0}^{2}(\Omega_{f})$,
       \begin{equation*}
       B(w,z)=\eta\int_{\Omega_{f}}\nu\nabla u:\nabla vdx+\eta\underset{l=1}{\overset{m-1}\sum}\int_{I}\frac{\alpha}{\sqrt{\tau_{l}^{T}\mathcal{K}\tau_{l}}}
       (u^{T}\tau_{l})(v^{T}\tau_{l})dI+\rho g\int_{\Omega_{p}}(\nabla\psi)^{T}\mathcal{K}\nabla\phi dx,
       \end{equation*}
       \begin{equation}\label{19}
       b(z,q)=-\eta\int_{\Omega_{f}}q\nabla\cdot udx,\text{\,\,}b_{I}(w,z)=\eta\rho g\int_{I}(\phi u^{T}n_{f}-\psi v^{T}n_{f})dI,\text{\,\,\,}
       F(z)=\eta\int_{\Omega_{f}}u^{T}fdx+\rho g\int_{\Omega_{p}}g_{0}\psi dx.
       \end{equation}

       Plugging equations $(\ref{18})$ and $(\ref{19})$ and utilizing the scalar product $\left(\cdot,\cdot\right)_{\bar{0}}$, defined in relation $(\ref{12})$, we obtain the variational formulation of the coupled evolutionary groundwater-surface water equations $(\ref{1})$-$(\ref{4})$ with initial-boundary conditions $(\ref{5})$-$(\ref{6})$. That is, find  $w=(v,\phi),$ $z=(u,\psi)\in V=V_{f}\times V_{p}$, and $p\in L_{0}^{2}(\Omega_{f})$, so that
       \begin{equation}\label{20}
       \left(w_{t},z\right)_{\bar{0}}+B(w,z)+b(z,p)+b_{I}(w,z)=F(z),\text{\,\,\,\,\,\,}\forall z\in V,
       \end{equation}
       \begin{equation}\label{21}
       b(w,q)=0,\text{\,\,\,\,\,\,}\forall q\in L_{0}^{2}(\Omega_{f}),
       \end{equation}
       \begin{equation}\label{22}
       w(x,0)=w_{0}(x),\text{\,\,\,\,\,\,on\,\,\,}\overline{\Omega}.
       \end{equation}

       Now, let $N$ be a positive integer and $\mathcal{T}_{\sigma}=\{t_{n}=n\sigma,\text{\,}n=0,1,...,N\}$, be a uniform partition of the time interval $[0,\text{\,}T]$, where $\sigma=\frac{T}{N}$ denotes the time step. Let $\mathcal{F}_{h}$ be the finite element method (FEM) triangulation of the domain $\overline{\Omega}=\Omega_{f}\cup\Gamma_{f}\cup\Omega_{p}\cup\Gamma_{p}$, which consists of triangles or tetrahedra $E$, with maximum diameter denoted by $"h"$. Moreover, $h$ represents the mesh grid of the computational domain $\overline{\Omega}$. Additionally, we assume that $\mathcal{F}_{h}$ satisfies the following assumptions: (a) $\mathcal{F}_{h}$ is regular, the triangulations $\mathcal{F}_{fh}$ and $\mathcal{F}_{ph}$ induced on the subdomains $\overline{\Omega}_{f}$ and $\overline{\Omega}_{p}$, share the same faces/edges on the interface $I=\Gamma_{f}\cap\Gamma_{p}$ and the triangulation $\mathcal{F}_{Ih}$ induced on the interface $I$ is quasi-uniform; (b) the interior of any triangle/tetrahedron is nonempty; (c) the intersection of the interior of two different triangles/tetrahedra is empty whereas the intersection of two elements in $\mathcal{F}_{h}$ is either a common face/edge or the empty set. Under these assumptions, let $V_{h}=V_{f}^{h}\times V_{p}^{h}$ and $Q_{h}\subset L_{0}^{2}(\Omega_{f})$, be the finite element spaces approximating the solution of the unsteady mixed Stokes-Darcy problem $(\ref{1})$-$(\ref{4})$ with initial condition $(\ref{5})$ and boundary condition $(\ref{6})$. Specifically, the spaces $V_{f}^{h}$, $V_{p}^{h}$, $V_{h}$, and $Q_{h}$ are defined as:
       \begin{equation*}
       V_{f}^{h}=\{u_{h}(t)\in V_{f},\text{\,}u_{h}(t)|_{E}\in[\mathcal{P}_{d+1}(E)]^{m},\text{\,}\forall E\in\mathcal{F}_{h},\forall t\in[0,\text{\,}T]\},
       \end{equation*}
       \begin{equation*}
       V_{p}^{h}=\{\psi_{h}(t)\in V_{p},\text{\,}\psi_{h}(t)|_{E}\in\mathcal{P}_{d+1}(E),\text{\,}\forall E\in\mathcal{F}_{h},\forall t\in[0,\text{\,}T]\},
       \end{equation*}
       \begin{equation*}
       V_{h}=\{w_{h}(t)=(u_{h}(t),\psi_{h}(t))\in V,\text{\,}w_{h}(t)|_{E}\in[\mathcal{P}_{d+1}(E)]^{m}\times\mathcal{P}_{d+1}(E),\text{\,}\forall E\in\mathcal{F}_{h},\forall t\in[0,\text{\,}T]\},
       \end{equation*}
       \begin{equation}\label{28}
       Q_{h}=\{q_{h}(t)\in L_{0}^{2}(\Omega_{f}),\text{\,}q_{h}(t)|_{E}\in\mathcal{P}_{d}(E),\text{\,}\forall E\in\mathcal{F}_{h},\forall t\in[0,\text{\,}T]\},
       \end{equation}
       where $\mathcal{P}_{l}(E)$ denotes the set of all polynomials defined on $E$ with degree less than or equal $l$. We introduce the set of discretely divergence velocities defined as
       \begin{equation}\label{29}
       V_{h}^{0}=\{w_{h}(t)\in V_{h},\text{\,}b(w_{h}(t),q)=0,\text{\,\,\,}\forall q\in Q_{h},\forall t\in[0,\text{\,}T]\}.
       \end{equation}

       The Stokes finite element spaces $V_{f}^{h}$ and $Q_{h}$ defined in equations $(\ref{28})$ are assumed to satisfy the usual inf-sup condition given in \cite{11gr}, that is, there is $\beta>0$ so that
       \begin{equation}\label{29a}
       \beta\leq\underset{q_{h}\neq0}{\underset{q_{h}\in Q_{h}}\inf}\underset{u_{h}\neq0}{\underset{u_{h}\in V_{f}^{h}}\sup}\frac{\int_{\Omega_{f}}q_{h}\nabla\cdot u_{h}dx}{\|u_{h}\|_{f}\|q_{h}\|_{0}}.
       \end{equation}

       The construction of the higher-order interpolation procedure combined with the finite element approach requires the approximation of the term $w_{t}^{n+1}$ at the discrete points $(t_{n-1},w^{n-1})$, $(t_{n},w^{n})$ and $(t_{n+1},w^{n+1})$. The interpolation of $w(t)$ at these points gives
       \begin{equation*}
      w(t)=\frac{(t-t_{n})(t-t_{n-1})}{(t_{n+1}-t_{n})(t_{n+1}-t_{n-1})}w^{n+1}+\frac{(t-t_{n-1})(t-t_{n+1})}{(t_{n}-t_{n-1})(t_{n}-t_{n+1})}w^{n}+
      \frac{(t-t_{n})(t-t_{n+1})}{(t_{n-1}-t_{n})(t_{n-1}-t_{n-1})}w^{n-1}+
       \end{equation*}
       \begin{equation*}
      \frac{1}{6}(t-t_{n-1})(t-t_{n})(t-t_{n+1})w_{3t}(\epsilon (t))=\frac{1}{2\sigma^{2}}[(t-t_{n})(t-t_{n-1})w^{n+1}-2(t-t_{n-1})(t-t_{n+1})w^{n}+
       \end{equation*}
       \begin{equation}\label{23}
       (t-t_{n})(t-t_{n+1})w^{n-1}]+\frac{1}{6}(t-t_{n-1})(t-t_{n})(t-t_{n+1})w_{3t}(\epsilon (t)),
       \end{equation}
        where $w_{3t}$ denotes $\frac{\partial^{3}w}{\partial t^{3}}$ and $\epsilon(t)\in(\min\{t_{n-1},t_{n},t_{n+1},t\},\text{\,}\max\{t_{n-1},t_{n},t_{n+1},t\})$. Differentiating equation $(\ref{23})$ with respect to time yields
         \begin{equation*}
      w_{t}(t)=\frac{1}{2\sigma^{2}}[(2t-t_{n}-t_{n-1})w^{n+1}-2(2t-t_{n-1}-t_{n+1})w^{n}+(2t-t_{n}-t_{n+1})w^{n-1}]+
       \end{equation*}
       \begin{equation*}
      \frac{1}{6}\{w_{3t}(\epsilon (t))\frac{d}{dt}[(t-t_{n-1})(t-t_{n})(t-t_{n+1})]+(t-t_{n-1})(t-t_{n})(t-t_{n+1})\frac{d}{dt}(w_{3t}(\epsilon (t)))\}.
       \end{equation*}

       Hence, simple calculations provide
        \begin{equation}\label{24}
       w_{t}^{n+1}=\frac{1}{2\sigma}(3w^{n+1}-4w^{n}+w^{n-1})+\frac{\sigma^{2}}{3}w_{3t}(\epsilon(t_{n+1})).
       \end{equation}

       Applying the variational formulation $(\ref{20})$-$(\ref{22})$ at the discrete time $t_{n+1}$, utilizing approximation $(\ref{24})$ and rearranging terms to obtain
       \begin{equation}\label{25}
       \left(3w^{n+1}-4w^{n}+w^{n-1},z\right)_{\bar{0}}+2\sigma[B(w^{n+1},z)+b(z,p^{n+1})+b_{I}(w^{n+1},z)]=2\sigma F^{n+1}(z)-\frac{2\sigma^{3}}{3}\left(w_{3t}
       (\epsilon(t_{n+1})),z\right)_{\bar{0}},
       \end{equation}
       for every $z\in V$,
       \begin{equation}\label{26}
       b(w^{n+1},q)=0,\text{\,\,\,\,\,\,}\forall q\in L_{0}^{2}(\Omega_{f}),
       \end{equation}
       \begin{equation}\label{27}
       w(x,0)=w_{0}(x),\text{\,\,\,\,\,\,on\,\,\,}\overline{\Omega}.
       \end{equation}

       Tracking the error term: $-\frac{2\sigma^{3}}{3}\left(w_{3t}(\epsilon(t_{n+1})),z\right)_{\bar{0}}$, in the discrete variational formulation $(\ref{25})$-$(\ref{27})$, replacing the exact solution $w^{n+1}\in V$, with the approximate one $w_{h}^{n+1}\in V_{h}^{0}$ and rearranging terms to get the desired hybrid higher-order unconditionally stable interpolation/finite element technique for solving the initial-boundary value problem $(\ref{1})$-$(\ref{6})$, that is, given $(w_{h}^{n-1},p_{h}^{n-1}),(w_{h}^{n},p_{h}^{n})\in V_{h}^{0}\times Q_{h}$, find $(w_{h}^{n+1},p_{h}^{n+1})\in V_{h}^{0}\times Q_{h}$, for $n=1,2,...,N-1$, so that
       \begin{equation}\label{30}
       \left(w_{h}^{n+1},z\right)_{\bar{0}}+\frac{2\sigma}{3}[B(w_{h}^{n+1},z)+b(z,p_{h}^{n+1})+b_{I}(w_{h}^{n+1},z)]=\frac{1}{3}\left(4w_{h}^{n}-w_{h}^{n-1},z\right)_{\bar{0}}
       +\frac{2\sigma}{3}F^{n+1}(z),\text{\,\,\,}\forall z\in V,
       \end{equation}
       \begin{equation}\label{31}
       b(w_{h}^{n+1},q)=0,\text{\,\,\,\,\,\,}\forall q\in Q_{h},
       \end{equation}
       subject to initial condition
       \begin{equation}\label{32}
       w_{h}^{0}=\mathbb{P}_{h}w_{0}\text{\,\,\,\,and\,\,\,\,}p_{h}^{0}=P_{h}p_{0},\text{\,\,\,\,\,\,on\,\,\,}\overline{\Omega},
       \end{equation}
       where $\mathbb{P}_{h}$ and $P_{h}$ represent the $L^{2}$-projection operators from $V$ onto $V_{h}$ and $L_{0}^{2}(\Omega_{f})$ onto $Q_{h}$, respectively, that satisfy for any functions $\bar{w}\in V$ and $\bar{p}\in L_{0}^{2}(\Omega_{f})$,
       \begin{equation}\label{33}
       \left(\mathbb{P}_{h}\bar{w},z\right)_{\bar{0}}=\left(\bar{w},z\right)_{\bar{0}},\text{\,\,\,}\forall z\in V_{h}\text{\,\,\,and\,\,\,} \left(P_{h}\bar{p},q\right)_{0}=\left(\bar{p},q\right)_{0},\text{\,\,\,}\forall q\in Q_{h}.
       \end{equation}

       It follows from equations $(\ref{30})$-$(\ref{32})$ that the new algorithm is a three-level implicit approach. Hence, both terms $(w_{h}^{0},p_{h}^{0})$ and $(w_{h}^{1},p_{h}^{1})$ are required to start the developed computational technique. However, $(w_{h}^{0},p_{h}^{0})$ is provided by the initial condition $(\ref{32})$ while the term $(w_{h}^{1},p_{h}^{1})=((v_{h}^{1},\phi_{h}^{1}),p_{h}^{1})$ is obtained using the second-order Taylor polynomial of $v^{1}$ and $\phi^{1}$, the bilinear operator $b(\cdot,\cdot)$ and the projection operators $\mathbb{P}_{h}$ and $P_{h}$, respectively. That is,
       \begin{equation}\label{34}
       w_{h}^{1}=\mathbb{P}_{h}\bar{w}^{1}\text{\,\,\,and\,\,\,} b(w_{h}^{1},p^{1}_{h})=0,
       \end{equation}
       where
       \begin{equation}\label{35}
       \bar{w}^{1}=(\bar{v}^{1},\bar{\phi}^{1}),\text{\,\,\,\,\,}\bar{v}^{1}=v_{0}+\nu\sigma\Delta v_{0}-\nabla p_{0}+f^{0}\text{\,\,\,and\,\,\,}\bar{\phi}^{1}=
       \phi_{0}+\frac{1}{S_{0}}\sigma\nabla\cdot(\mathcal{K}\nabla\phi_{0})+g_{0}^{0}.
       \end{equation}

        It's worth mentioning that the term $\bar{w}^{1}=(\bar{v}^{1},\bar{\phi}^{1})$ satisfies
       \begin{equation}\label{36}
       \|w^{1}-\bar{w}^{1}\|_{\bar{0}}\leq \frac{\sigma^{2}}{2}\underset{0\leq \theta\leq T}{\max}\|w_{2t}(\theta)\|_{\bar{0}}.
       \end{equation}

       In the following we will analyze the unconditional stability and the error estimates of the proposed interpolation approach combined with the finite element procedure $(\ref{30})$-$(\ref{32})$ and $(\ref{34})$ in a numerical solution of the evolutionary Stokes-Darcy problem $(\ref{1})$-$(\ref{4})$ subjects to initial condition $(\ref{5})$ and boundary condition $(\ref{6})$. We assume that the $L^{2}$-projection operators $\mathbb{P}_{h}$ and $P_{h}$ defined in equation $(\ref{33})$ satisfy the following approximation properties of piecewise polynomials
       \begin{equation}\label{37}
       \|(\mathcal{I}-\mathbb{P}_{h})w\|_{\bar{0}}\leq C_{1}h^{d+1}\|w\|_{V^{d+1}},\text{\,\,}\forall w\in V^{d+1};\text{\,\,\,}\|(\mathcal{I}_{0}-P_{h})q\|_{0}\leq C_{2}h^{d}\|q\|_{H^{d}(\Omega_{f})},\text{\,\,}\forall q\in H^{d}(\Omega_{f}),
       \end{equation}
       where $\mathcal{I}$ and $\mathcal{I}_{0}$ are identity operators, $V^{d+1}$ denotes a subspace of $[H^{d+1}(\Omega_{f})]^{m}\times H^{d+1}(\Omega_{p})$, and $C_{l}$, for $l=1,2$, are two positive constants independent of the mesh grid $h$ and time step $\sigma$. To establish the unconditional stability and error estimates of the new algorithm, we will use the following Poincar\'{e}-Friedrich and trace inequalities $(\ref{38})$ and $(\ref{39})$, respectively,
       \begin{equation}\label{38}
        \|X\|_{0}\leq \sqrt{\widetilde{C}}\|X\|_{s}, \text{\,\,\,}\forall X\in V_{s},
       \end{equation}
       \begin{equation}\label{39}
        \|X\|_{\Gamma_{s}}\leq\sqrt{\widehat{C}}\|X\|_{0}^{\frac{1}{2}}\|X\|_{s}^{\frac{1}{2}}, \text{\,\,\,}\forall X\in V_{s},
       \end{equation}
        where $s\in\{f,p\}$ and $\widetilde{C}$, $\widehat{C}>0$, are two constants which do not depend on the time step $\sigma$ and space step $h$, together with Lemmas $\ref{l1}$-$\ref{l2}$.

        \begin{lemma}\label{l1}
        For any $w=(v,\phi),\text{\,\,}z=(u,\psi)\in V$, the bilinear operator $B(\cdot,\cdot)$ is symmetric and satisfies
       \begin{equation*}
        B(w,z)\leq \left(2+\frac{\alpha(m-1)\widetilde{C}\widehat{C}}{\nu\sqrt{k_{\min}}}\right)\|w\|_{\nabla}\|z\|_{\nabla}\text{\,\,\,\,and\,\,\,\,}
        B(z,z)\geq \frac{k_{\min}}{k_{\max}}\|z\|_{\nabla}^{2},
       \end{equation*}
        where $m=2$ or $3$, $\nu>0$ is the physical parameter given in equation $(\ref{1})$, $\widetilde{C}$ and $\widehat{C}$ are constants given in estimates $(\ref{38})$ and $(\ref{39})$, respectively,
        \begin{equation*}\label{}
        k_{\min}=\min\left\{\underset{x\in\overline{\Omega}_{p}}{\min}\lambda_{l}(x),\text{\,\,for\,\,}l=1,2,...,m\right\}>0\text{\,\,\,and\,\,\,}
        k_{\max}=\max\left\{\underset{x\in\overline{\Omega}_{p}}{\max}\lambda_{l}(x),\text{\,\,for\,\,}l=1,2,...,m\right\}<\infty,
       \end{equation*}
         where $\lambda_{l}$ are the eigenvalues of the hydraulic tensor $\mathcal{K}$ defined in Section $\ref{sec1}$. In fact, $\mathcal{K}\in M_{m}(\mathcal{C}(\overline{\Omega}_{p}))$ is a symmetric positive definite tensor, then $\mathcal{K}$ is diagonalizable and its eigenvalues $\lambda_{l}\in\mathcal{C}(\overline{\Omega}_{p})$ satisfy $\lambda_{l}(x)>0$, for every $x\in\overline{\Omega}_{p}$ (compact subset of $\mathbb{R}^{m}$), and $\|\cdot\|_{\nabla}$ denotes the norm defined in relation $(\ref{13})$.
       \end{lemma}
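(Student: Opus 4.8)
The plan is to exploit the fact that $B(w,z)$ is the sum of three pieces --- the Stokes term $\eta\nu\int_{\Omega_{f}}\nabla u:\nabla v\,dx$, the Beavers--Joseph--Saffman interface sum $\eta\sum_{l=1}^{m-1}\int_{I}\frac{\alpha}{\sqrt{\tau_{l}^{T}\mathcal{K}\tau_{l}}}(u^{T}\tau_{l})(v^{T}\tau_{l})\,dI$, and the Darcy term $\rho g\int_{\Omega_{p}}(\nabla\psi)^{T}\mathcal{K}\nabla\phi\,dx$ --- and to estimate each piece separately, adding the bounds at the end. Symmetry is immediate termwise: $\nabla u:\nabla v=\nabla v:\nabla u$ by the definition $(\ref{14a})$ of the matrix inner product, the scalar factors $(u^{T}\tau_{l})(v^{T}\tau_{l})$ commute, and $(\nabla\psi)^{T}\mathcal{K}\nabla\phi=(\nabla\phi)^{T}\mathcal{K}\nabla\psi$ since $\mathcal{K}=\mathcal{K}^{T}$; hence $B(w,z)=B(z,w)$.

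For the continuity bound I would estimate each piece against $\|w\|_{\nabla}\|z\|_{\nabla}$. The Stokes term gives, by Cauchy--Schwarz and the definition $(\ref{13})$ of $\|\cdot\|_{\nabla}$, $\eta\nu\int_{\Omega_{f}}\nabla u:\nabla v\,dx\leq\eta\nu\|u\|_{f}\|v\|_{f}\leq\|z\|_{\nabla}\|w\|_{\nabla}$. The Darcy term is handled by the pointwise Cauchy--Schwarz inequality for the inner product $a,b\mapsto a^{T}\mathcal{K}b$, then Cauchy--Schwarz in $L^{2}(\Omega_{p})$, and finally the eigenvalue bound $\xi^{T}\mathcal{K}\xi\leq k_{\max}|\xi|^{2}$, yielding $\rho g\int_{\Omega_{p}}(\nabla\psi)^{T}\mathcal{K}\nabla\phi\,dx\leq\rho g k_{\max}\|\psi\|_{p}\|\phi\|_{p}\leq\|z\|_{\nabla}\|w\|_{\nabla}$; these two pieces account for the constant $2$. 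For the interface sum I would use $\tau_{l}^{T}\mathcal{K}\tau_{l}\geq k_{\min}|\tau_{l}|^{2}=k_{\min}$ together with $|u^{T}\tau_{l}|\leq|u|$, so that Cauchy--Schwarz over $I$ gives $\int_{I}(u^{T}\tau_{l})(v^{T}\tau_{l})\,dI\leq\|u\|_{L^{2}(I)}\|v\|_{L^{2}(I)}$; since $I\subset\Gamma_{f}$, chaining the trace inequality $(\ref{39})$ (with $s=f$) and the Poincar\'{e}--Friedrich inequality $(\ref{38})$ bounds $\|u\|_{L^{2}(I)}\leq\sqrt{\widetilde{C}\widehat{C}}\,\|u\|_{f}$, and likewise for $v$. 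Using $\|u\|_{f}\leq(\eta\nu)^{-1/2}\|z\|_{\nabla}$, the interface sum is at most $\frac{\alpha(m-1)\widetilde{C}\widehat{C}}{\nu\sqrt{k_{\min}}}\|w\|_{\nabla}\|z\|_{\nabla}$, and adding the three estimates yields the stated constant.

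For coercivity I would take $w=z$ and note that the interface sum then equals $\eta\sum_{l=1}^{m-1}\int_{I}\frac{\alpha}{\sqrt{\tau_{l}^{T}\mathcal{K}\tau_{l}}}(u^{T}\tau_{l})^{2}\,dI\geq 0$ (as $\alpha>0$ and $\mathcal{K}$ is positive definite), so it can be discarded; the eigenvalue bound $\xi^{T}\mathcal{K}\xi\geq k_{\min}|\xi|^{2}$ then gives $B(z,z)\geq\eta\nu\|u\|_{f}^{2}+\rho g k_{\min}\|\psi\|_{p}^{2}$. Comparing with $\|z\|_{\nabla}^{2}=\eta\nu\|u\|_{f}^{2}+\rho g k_{\max}\|\psi\|_{p}^{2}$ and using $k_{\min}/k_{\max}\leq 1$ yields $B(z,z)\geq\frac{k_{\min}}{k_{\max}}\|z\|_{\nabla}^{2}$.

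The main obstacle is the interface sum in the continuity bound: one must carefully chain the trace inequality $(\ref{39})$, the Poincar\'{e}--Friedrich inequality $(\ref{38})$ and the spectral lower bound $\tau_{l}^{T}\mathcal{K}\tau_{l}\geq k_{\min}$ so as to recover precisely the coefficient $\frac{\alpha(m-1)\widetilde{C}\widehat{C}}{\nu\sqrt{k_{\min}}}$, and to justify both the restriction of the boundary trace from $\Gamma_{f}$ to $I$ and the domination of the tangential component $u^{T}\tau_{l}$ by $|u|$ on $I$. The remaining steps are routine applications of Cauchy--Schwarz and of the two-sided bounds $k_{\min}|\xi|^{2}\leq\xi^{T}\mathcal{K}\xi\leq k_{\max}|\xi|^{2}$.
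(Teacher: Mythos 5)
Your proposal is correct and follows essentially the same route as the paper: termwise symmetry, Cauchy--Schwarz plus the spectral bounds $k_{\min}|\xi|^{2}\leq\xi^{T}\mathcal{K}\xi\leq k_{\max}|\xi|^{2}$ for the Stokes and Darcy pieces, the chained trace/Poincar\'{e}--Friedrich inequalities with $\tau_{l}^{T}\mathcal{K}\tau_{l}\geq k_{\min}$ for the interface sum, and discarding the nonnegative interface term for coercivity. The only cosmetic differences are that you bound the Stokes and Darcy terms by $\|w\|_{\nabla}\|z\|_{\nabla}$ each rather than combining them into $2\|w\|_{\nabla}\|z\|_{\nabla}$ in one step, and you dominate $|u^{T}\tau_{l}|$ by $|u|$ pointwise where the paper manipulates the projection matrices $\tau_{l}\tau_{l}^{T}$; both yield the same constants.
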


       \begin{proof}
       For every $w=(v,\phi),$ $z=(u,\psi)\in V$, utilizing equation $(\ref{19})$, it is not hard to observe that $B(w,z)=B(z,w)$, so the bilinear operator $B(\cdot,\cdot)$ is symmetric. Furthermore, it follows from equation $(\ref{19})$ along with the scalar products and norms given in relation $(\ref{11})$ that
       \begin{equation*}
       B(w,z)=\eta\int_{\Omega_{f}}\nu\nabla u:\nabla vdx+\rho g\int_{\Omega_{p}}(\nabla\psi)^{T}\mathcal{K}\nabla\phi dx+ \eta\underset{l=1}{\overset{m-1}\sum}\int_{I}\frac{\alpha}{\sqrt{\tau_{l}^{T}\mathcal{K}\tau_{l}}}(u^{T}\tau_{l})(v^{T}\tau_{l})dI\leq \eta\nu\left|\int_{\Omega_{f}}\nabla u:\nabla vdx\right|
       \end{equation*}
       \begin{equation*}
       +\rho g\left|\int_{\Omega_{p}}(\nabla\psi)^{T}\mathcal{K}\nabla\phi dx\right|+ \eta\alpha\underset{l=1}{\overset{m-1}\sum}
       \left|\int_{I}\frac{1}{\sqrt{\tau_{l}^{T}\mathcal{K}\tau_{l}}}(u^{T}\tau_{l})(v^{T}\tau_{l})dI\right|\leq \eta\nu\|u\|_{f}\|v\|_{f}+\rho g k_{\max}\|\phi\|_{p}\|\psi_{p}+
       \end{equation*}
       \begin{equation}\label{40}
       \frac{\eta\alpha}{\sqrt{k_{\min}}}\underset{l=1}{\overset{m-1}\sum}\left|\int_{I}(u^{T}\tau_{l})(v^{T}\tau_{l})dI\right|.
       \end{equation}

       Indeed, applying the Cauchy-Schwarz inequality, direct computations give: $\left|\int_{\Omega_{f}}\nabla u:\nabla vdx\right|=|\left(u,v\right)_{f}|\leq\|u\|_{f}\|v\|_{f}$ and $\left|\int_{\Omega_{p}}(\nabla\psi)^{T}\mathcal{K}\nabla\phi dx\right|\leq\int_{\Omega_{p}}\left|(\nabla\psi)^{T}\mathcal{K}\nabla\phi\right|dx\leq k_{\max}\int_{\Omega_{p}}\left|(\nabla\psi)^{T}\nabla\phi\right|dx\leq k_{\max}\|\phi\|_{p}\|\psi\|_{p}$. Additionally, since $\tau_{l}$, for $l=1,...,m-1$, are unit tangential vector on the coupling interface $I$, it holds: $\tau_{l}^{T}\mathcal{K}\tau_{l}\geq k_{\min}\tau_{l}^{T}\tau_{l}=k_{\min}$. Now, using the Cauchy-Schwarz inequality along with the norm $\|\cdot\|_{I}$ defined in relation $(\ref{11})$, it is not hard to see that
       \begin{equation*}
       \left|\int_{I}(u^{T}\tau_{l})(v^{T}\tau_{l})dI\right|=\left|\int_{I}(u^{T}\tau_{l})(\tau_{l}^{T}v)dI\right|=\left|\int_{I}u^{T}\tau_{l}\tau_{l}^{T}vdI\right|=
       \left|\int_{I}u^{T}vdI\right|=\left|\int_{I}(n_{f}^{T}u)^{T}(n_{f}^{T}v)dI\right|\leq \|u\|_{I}\|v\|_{I},
       \end{equation*}
       since $\tau_{l}\tau_{l}^{T}$ and $n_{f}n_{f}^{T}$ are matrices of orthogonal projections. Indeed, $(\tau_{l}\tau_{l}^{T})\tau_{l}\tau_{l}^{T}=\tau_{l}\underset{=1}
       {\underbrace{(\tau_{l}^{T}\tau_{l})}}\tau_{l}^{T}=\tau_{l}\tau_{l}^{T}$ and $(n_{f}n_{f}^{T})n_{f}n_{f}^{T}=n_{f}\underset{=1}{\underbrace{(n_{f}^{T}n_{f})}}n_{f}^{T}=
       n_{f}n_{f}^{T}$. Applying both trace and Poincar\'{e}-Friedrich inequalities $(\ref{39})$ and $(\ref{38})$, and using the norm $\|\cdot\|_{\nabla}$ given in equation $(\ref{13})$, simple calculations yield
       \begin{equation*}
       \left|\int_{I}(u^{T}\tau_{l})(v^{T}\tau_{l})dI\right|\leq \|u\|_{I}\|v\|_{I}\leq \widehat{C}\|u\|_{0}^{\frac{1}{2}}\|u\|_{f}^{\frac{1}{2}}
       \|v\|_{0}^{\frac{1}{2}}\|v\|_{f}^{\frac{1}{2}}\leq\widehat{C}\widetilde{C}\|u\|_{f}\|v\|_{f}\leq
       \end{equation*}
       \begin{equation*}
       \widehat{C}\widetilde{C}(\eta\nu)^{-1}(\eta\nu\|u\|_{f}^{2}+\rho gk_{\max}\|\psi\|_{p}^{2})^{\frac{1}{2}}(\eta\nu\|v\|_{f}^{2}+\rho gk_{\max}\|\phi\|_{p}^{2})^{\frac{1}{2}}=\widehat{C}\widetilde{C}(\eta\nu)^{-1}\|w\|_{\nabla}\|z\|_{\nabla}.
       \end{equation*}

       Substituting this estimate into inequality $(\ref{40})$ and performing direct calculations to obtain
       \begin{equation*}
       B(w,z)\leq \eta\nu\|u\|_{f}\|v\|_{f}+\rho g k_{\max}\|\phi\|_{p}\|\psi\|_{p}+\frac{\alpha(m-1)\widehat{C}\widetilde{C}}{\nu\sqrt{k_{\min}}}\|w\|_{\nabla}\|z\|_{\nabla}=
       (\eta\nu\|u\|_{f}^{2})^{\frac{1}{2}}(\eta\nu\|v\|_{f}^{2})^{\frac{1}{2}}+
       \end{equation*}
       \begin{equation*}
       (\rho g k_{\max}\|\phi\|_{p}^{2})^{\frac{1}{2}}(\rho g k_{\max}\|\psi\|_{p}^{2})^{\frac{1}{2}}+\frac{\alpha(m-1)\widehat{C}\widetilde{C}}{\nu\sqrt{ k_{\min}}}
       \|w\|_{\nabla}\|z\|_{\nabla}\leq 2(\eta\nu\|u\|_{f}^{2}+\rho g k_{\max}\|\psi\|_{p}^{2})^{\frac{1}{2}}\times
       \end{equation*}
       \begin{equation}\label{40a}
       (\eta\nu\|v\|_{f}^{2}+\rho g k_{\max}\|\phi\|_{p}^{2})^{\frac{1}{2}}+\frac{\alpha(m-1)\widehat{C}\widetilde{C}}{\nu\sqrt{ k_{\min}}}\|w\|_{\nabla}\|z\|_{\nabla}=
       \left(2+\frac{\alpha(m-1)\widehat{C}\widetilde{C}}{\nu\sqrt{ k_{\min}}}\right)\|w\|_{\nabla}\|z\|_{\nabla},
       \end{equation}
       where $"\times"$ means the usual multiplication in $\mathbb{R}$. This ends the proof of the first inequality in Lemma $\ref{l1}$. In addition,
       \begin{equation*}
        B(z,z)=\eta\int_{\Omega_{f}}\nu\nabla u:\nabla udx+\rho g\int_{\Omega_{p}}(\nabla\psi)^{T}\mathcal{K}\nabla\psi dx+\eta\overset{m-1}{\underset{l=1}\sum}\int_{I}\frac{\alpha}{\sqrt{\tau_{l}^{T}\mathcal{K}\tau_{l}}}(u^{T}\tau_{l})^{2} dI\geq\eta\nu\|u\|_{f}^{2}+\rho g k_{\min}\|\psi\|_{p}^{2}+
       \end{equation*}
       \begin{equation}\label{40b}
        \eta\overset{m-1}{\underset{l=1}\sum}\int_{I}\frac{\alpha}{\sqrt{\tau_{l}^{T}\mathcal{K}\tau_{l}}}(u^{T}\tau_{l})^{2} dI \geq \eta\nu\|u\|_{f}^{2}
        +\frac{k_{\min}}{k_{\max}}\rho gk_{\max}\|\psi\|_{p}^{2}\geq \frac{k_{\min}}{k_{\max}}\|z\|_{\nabla}^{2}.
       \end{equation}

       The first inequality follows from:
       \begin{equation*}
        \int_{\Omega_{p}}(\nabla\psi)^{T}\mathcal{K}\nabla\psi dx=\int_{\Omega_{p}}(\nabla\psi)^{T}\mathcal{O}diag(\lambda_{1}(x),\cdots,\lambda_{m}(x))\mathcal{O}^{T}\nabla\psi dx\geq k_{\min}\int_{\Omega_{p}}(\nabla\psi)^{T}\nabla\psi dx = k_{\min}\|\psi\|_{p}^{2},
       \end{equation*}
        where the tensor $\mathcal{K}$ is decomposed as: $\mathcal{K}=\mathcal{O}diag(\lambda_{1},\cdots,\lambda_{m})\mathcal{O}^{T}$ and $\mathcal{O}$ is the orthogonal matrix formed with the eigenvectors of $\mathcal{K}$. This completes the proof of Lemma $\ref{l1}$.
       \end{proof}

       \begin{remark}
       It's worth mentioning that Lemma $\ref{l1}$ indicates that the bilinear operator $B(\cdot,\cdot)$ defines a scalar product on the Sobolev space $V\times V$. Let $\|\cdot\|_{B}$ be the corresponding norm, so for $z=(u,\psi)\in V$, $\|\cdot\|_{B}$ is defined as
       \begin{equation}\label{41}
       \|z\|_{B}^{2}=B(z,z)=\eta\nu\int_{\Omega_{f}}\nabla u:\nabla udx+\rho g\int_{\Omega_{p}}(\nabla\psi)^{T}\mathcal{K}\nabla\psi dx+\eta\overset{m-1}{\underset{l=1}\sum}\int_{I}\frac{\alpha}{\sqrt{\tau_{l}^{T}\mathcal{K}\tau_{l}}}(u^{T}\tau_{l})^{2} dI,
       \end{equation}
       where $\tau_{l}$, $l=1,2,...,m-1$, are the linearly independent unit tangential vectors on the interface $I$. Furthermore, using estimates $(\ref{40a})$ and $(\ref{40b})$, it's not difficult to observe that both norms $\|\cdot\|_{\nabla}$ and $\|\cdot\|_{B}$ are equivalent on the Sobolev space $V$, that is,
       \begin{equation}\label{41a}
        \sqrt{\frac{k_{\min}}{k_{\max}}}\|z\|_{\nabla} \leq \|z\|_{B}\leq\sqrt{\left(2+\frac{\alpha(m-1)\widehat{C}\widetilde{C}}{\nu\sqrt{ k_{\min}}}\right)}\|z\|_{\nabla}, \text{\,\,\,\,}\forall z\in V.
       \end{equation}

       Furthermore, the ratio $\frac{k_{\max}}{k_{\min}}$ should be observed as the condition number of the symmetric positive definite matrix $\mathcal{K}$.
       \end{remark}

       \begin{lemma}\label{l2}
        The bilinear operator $b_{I}(\cdot,\cdot)$ defined in relation $(\ref{19})$ is anti-symmetric.
       \end{lemma}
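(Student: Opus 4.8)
The plan is to verify the claim directly from the definition of $b_{I}(\cdot,\cdot)$ in equation $(\ref{19})$, keeping careful track of which component of each argument enters the integrand. Writing a generic pair in $V=V_{f}\times V_{p}$ as $(a,b)$ with $a\in V_{f}$ and $b\in V_{p}$, the defining formula $(\ref{19})$ says, for $w=(v,\phi)$ and $z=(u,\psi)$,
\begin{equation*}
b_{I}(w,z)=\eta\rho g\int_{I}\left(\phi\, u^{T}n_{f}-\psi\, v^{T}n_{f}\right)dI,
\end{equation*}
i.e. the integrand is the difference of (second component of the first argument)$\times$(first component of the second argument, paired with $n_{f}$) and (second component of the second argument)$\times$(first component of the first argument, paired with $n_{f}$).

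The key step is then to evaluate $b_{I}(z,w)$ by interchanging the roles of the two arguments in this formula: the first argument becomes $z=(u,\psi)$, so its first and second components are $u$ and $\psi$, while the second argument becomes $w=(v,\phi)$, with first and second components $v$ and $\phi$. Substituting into the displayed formula gives
\begin{equation*}
b_{I}(z,w)=\eta\rho g\int_{I}\left(\psi\, v^{T}n_{f}-\phi\, u^{T}n_{f}\right)dI=-\eta\rho g\int_{I}\left(\phi\, u^{T}n_{f}-\psi\, v^{T}n_{f}\right)dI=-b_{I}(w,z).
\end{equation*}
Since $w,z\in V$ were arbitrary, this establishes that $b_{I}(\cdot,\cdot)$ is anti-symmetric. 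I would also record the immediate corollary $b_{I}(z,z)=0$ for every $z\in V$, since this is the form in which the lemma is used later: when the discrete scheme $(\ref{30})$ is tested with $z=w_{h}^{n+1}$, the interface coupling term $b_{I}(w_{h}^{n+1},w_{h}^{n+1})$ vanishes, which is exactly what makes the energy estimate go through without stabilization terms.

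There is no real obstacle here: the statement is a one-line bookkeeping verification. The only point requiring a little care is not to confuse the two terms in the integrand — the term $\phi\,u^{T}n_{f}$ mixes the Darcy component of the \emph{first} slot with the Stokes component of the \emph{second} slot, while $\psi\,v^{T}n_{f}$ mixes them the other way around — so that swapping the arguments exchanges the two terms and flips the overall sign. No inequalities, trace estimates, or properties of $\mathcal{K}$ are needed for this lemma.
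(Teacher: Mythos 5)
Your proof is correct and coincides with the paper's own argument: both simply evaluate $b_{I}(z,w)$ from the definition in $(\ref{19})$ and observe that swapping the arguments exchanges the two terms of the integrand, flipping the sign. The extra remark that $b_{I}(z,z)=0$ is exactly how the lemma is used later in the stability proof, so that addition is apt but not a departure from the paper.
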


       \begin{proof}
        For any $w=(v,\phi)$ and $z=(u,\psi)\in V$,
       \begin{equation*}
        b_{I}(w,z)=\eta\rho g\int_{I}(\phi u^{T}n_{f}-\psi v^{T}n_{f})dI=-\eta\rho g\int_{I}(\psi v^{T}n_{f}-\phi u^{T}n_{f})dI=-b_{I}(z,w).
       \end{equation*}

       Thus $b_{I}(\cdot,\cdot)$ is anti-symmetric.
       \end{proof}

        In the following, we assume that the exact solution $(w,p)$ of the mixed Stokes-Darcy model $(\ref{1})$-$(\ref{6})$ satisfies the regularity condition $(w,p)\in H^{4}(0,T;\text{\,}V^{d+1})\times L^{2}(0,T;\text{\,}L^{2}_{0}(\Omega_{f}))$. Thus, there is a positive constant $\widehat{C}_{2}$ independent of the grid space $h$ and time step $\sigma$, so that
        \begin{equation}\label{42}
       \||w|\|_{B,\infty}+\||w_{2t}|\|_{B,\infty}+\||w_{3t}|\|_{B,\infty}=\underset{0\leq \theta\leq T}{\max}\|w(\theta)\|_{B}+\underset{0\leq\theta\leq T}{\max}\|w_{2t}(\theta)\|_{B}+\underset{0\leq \theta\leq T}{\max}\|w_{3t}(\theta)\|_{B}\leq \widehat{C}_{2}.
       \end{equation}

         \section{Stability analysis and error estimates}\label{sec3}

       \text{\,\,\,\,\,\,\,\,\,\,} In this section, we establish the unconditional stability and error estimates of the developed computational approach $(\ref{30})$-$(\ref{32})$ and $(\ref{34})$ for solving the initial-boundary value problem $(\ref{1})$-$(\ref{6})$.

       \begin{theorem}\label{t1} (Unconditional stability and error estimates).
          Let $(w,p)=((v,\phi),p)\in H^{4}(0,T;\text{\,}V^{d+1})\times L^{2}(0,T;\text{\,}L^{2}_{0}(\Omega_{f}))$ be the analytical solution of the unsteady mixed Stokes-Darcy problem $(\ref{1})$-$(\ref{4})$ subjects to initial-boundary conditions $(\ref{5})$-$(\ref{6})$ and let $(w_{h},p_{h})=((v_{h},\phi_{h}),p_{h})\in V_{h}^{0}\times Q_{h}$ be the approximate one provided by the new algorithm $(\ref{30})$-$(\ref{32})$ and $(\ref{34})$. Thus, it holds
          \begin{equation*}
        \||w_{h}|\|_{\bar{0},\infty}^{2}+2\sigma\underset{l=1}{\overset{n}\sum}\|w_{h}^{l+1}-w^{l+1}\|_{B}^{2}\leq 2\||w|\|_{B,\infty}^{2}
        +2\max\left\{\frac{\widetilde{C}_{1}k_{\max}}{k_{\min}}(\frac{5}{2}\||w_{2t}|\|_{B,\infty}^{2}+\frac{4T\widetilde{C}_{1}k_{\max}}{27k_{\min}}
        \widetilde{C}_{1}\||w_{3t}|\|_{B,\infty}^{2}),\right.
       \end{equation*}
        \begin{equation}\label{43}
        \left.C_{1}^{2}(10\|w^{1}\|_{V^{d+1}}^{2}+3\|w_{0}\|_{V^{d+1}}^{2})\right\}(\sigma^{2}+h^{d+1})^{2},
        \end{equation}
         \begin{equation*}
        \||e_{h}|\|_{\bar{0},\infty}^{2}+\sigma\underset{l=1}{\overset{n}\sum}\|e_{w,h}^{l+1}\|_{B}^{2} \leq \max\left\{\frac{\widetilde{C}_{1}k_{\max}}{k_{\min}}
        (\frac{5}{2}\||w_{2t}|\|_{B,\infty}^{2}+\frac{4T\widetilde{C}_{1}k_{\max}}{27k_{\min}}\widetilde{C}_{1}\||w_{3t}|\|_{B,\infty}^{2}),\right.
       \end{equation*}
        \begin{equation}\label{44}
        \left. C_{1}^{2}(10\|w^{1}\|_{V^{d+1}}^{2}+3\|w_{0}\|_{V^{d+1}}^{2})\right\}(\sigma^{2}+h^{d+1})^{2},
        \end{equation}
        where $\widetilde{C}_{1}=\widetilde{C}\max\{\nu^{-1},S_{0}k_{\max}^{-1}\}$ and $e_{w,h}(t)=w_{h}(t)-w(t)$, for any $t\in[0,\text{\,}T]$, is the corresponding error term.
        \end{theorem}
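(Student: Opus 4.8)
The plan is to run a discrete energy estimate adapted to the three–level (BDF2–type) structure of the scheme, comparing the computed $w_{h}^{n+1}$ directly with the exact $w^{n+1}$.

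\emph{Step 1: the error equation.} I would first multiply the scheme $(\ref{30})$ by $3$ and subtract from it the consistency identity $(\ref{25})$ satisfied by the exact solution. With $e_{w,h}^{k}:=w_{h}^{k}-w^{k}$ and $\pi^{k}:=p_{h}^{k}-p^{k}$ this gives, after testing with $z=e_{w,h}^{n+1}$,
\[
(3e_{w,h}^{n+1}-4e_{w,h}^{n}+e_{w,h}^{n-1},e_{w,h}^{n+1})_{\bar{0}}+2\sigma B(e_{w,h}^{n+1},e_{w,h}^{n+1})+2\sigma b(e_{w,h}^{n+1},\pi^{n+1})+2\sigma b_{I}(e_{w,h}^{n+1},e_{w,h}^{n+1})=\frac{2\sigma^{3}}{3}(w_{3t}(\epsilon(t_{n+1})),e_{w,h}^{n+1})_{\bar{0}}.
\]
The interface term vanishes by the anti–symmetry of $b_{I}$ (Lemma $\ref{l2}$), and the pressure–coupling term is removed using the incompressibility constraints $(\ref{26})$ and $(\ref{31})$: since $b(w^{n+1},\cdot)\equiv 0$ on $L_{0}^{2}(\Omega_{f})$ and $b(w_{h}^{n+1},q)=0$ for $q\in Q_{h}$, only $b(w_{h}^{n+1},p^{n+1}-P_{h}p^{n+1})$ survives, which is absorbed using the inf–sup condition $(\ref{29a})$ together with a uniform bound on $P_{h}p^{n+1}$. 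Using $B(z,z)=\|z\|_{B}^{2}$ (Remark after Lemma $\ref{l1}$), this reduces to $(3e_{w,h}^{n+1}-4e_{w,h}^{n}+e_{w,h}^{n-1},e_{w,h}^{n+1})_{\bar{0}}+2\sigma\|e_{w,h}^{n+1}\|_{B}^{2}=\frac{2\sigma^{3}}{3}(w_{3t}(\epsilon(t_{n+1})),e_{w,h}^{n+1})_{\bar{0}}$.

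\emph{Step 2: G–stability and absorption.} Next I would invoke the algebraic identity
\[
(3a-4b+c,a)_{\bar{0}}=\frac12\bigl(\|a\|_{\bar{0}}^{2}+\|2a-b\|_{\bar{0}}^{2}\bigr)-\frac12\bigl(\|b\|_{\bar{0}}^{2}+\|2b-c\|_{\bar{0}}^{2}\bigr)+\frac12\|a-2b+c\|_{\bar{0}}^{2}
\]
with $a=e_{w,h}^{n+1}$, $b=e_{w,h}^{n}$, $c=e_{w,h}^{n-1}$, and set $\mathcal{E}^{k}=\|e_{w,h}^{k}\|_{\bar{0}}^{2}+\|2e_{w,h}^{k}-e_{w,h}^{k-1}\|_{\bar{0}}^{2}$. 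Dropping the nonnegative square $\frac12\|e_{w,h}^{n+1}-2e_{w,h}^{n}+e_{w,h}^{n-1}\|_{\bar{0}}^{2}$ leaves $\frac12(\mathcal{E}^{n+1}-\mathcal{E}^{n})+2\sigma\|e_{w,h}^{n+1}\|_{B}^{2}\le\frac{2\sigma^{3}}{3}(w_{3t},e_{w,h}^{n+1})_{\bar{0}}$. For the right side I would use Cauchy–Schwarz in $(\cdot,\cdot)_{\bar{0}}$ and then pass from $\|\cdot\|_{\bar{0}}$ to $\|\cdot\|_{B}$: by the Poincaré–Friedrich inequality $(\ref{38})$ and the definition $\widetilde C_{1}=\widetilde C\max\{\nu^{-1},S_{0}k_{\max}^{-1}\}$ one has $\|z\|_{\bar{0}}^{2}\le \widetilde C_{1}\|z\|_{\nabla}^{2}$, and then $\|z\|_{\nabla}^{2}\le\frac{k_{\max}}{k_{\min}}\|z\|_{B}^{2}$ by the norm equivalence $(\ref{41a})$ of Lemma $\ref{l1}$. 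A Young inequality then absorbs $\sigma\|e_{w,h}^{n+1}\|_{B}^{2}$ into the left side, leaving $\frac12(\mathcal{E}^{n+1}-\mathcal{E}^{n})+\sigma\|e_{w,h}^{n+1}\|_{B}^{2}\le C\sigma^{5}\bigl(\tfrac{\widetilde C_{1}k_{\max}}{k_{\min}}\bigr)^{2}\||w_{3t}|\|_{B,\infty}^{2}$.

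\emph{Step 3: summation, initial data, and the two bounds.} Summing over $l=1,\dots,n$ telescopes the left side to $\frac12\mathcal{E}^{n+1}+\sigma\sum_{l=1}^{n}\|e_{w,h}^{l+1}\|_{B}^{2}$, and since $n\sigma\le T$ the accumulated consistency error is $\le C T\sigma^{4}(\cdots)\||w_{3t}|\|_{B,\infty}^{2}$. It remains to bound $\mathcal{E}^{1}$ from the start-up values: $e_{w,h}^{0}=(\mathbb{P}_{h}-\mathcal{I})w_{0}$ gives $\|e_{w,h}^{0}\|_{\bar{0}}\le C_{1}h^{d+1}\|w_{0}\|_{V^{d+1}}$ by $(\ref{37})$, while $(\ref{34})$–$(\ref{35})$ give $e_{w,h}^{1}=\mathbb{P}_{h}(\bar w^{1}-w^{1})+(\mathbb{P}_{h}-\mathcal{I})w^{1}$, whence, using the non-expansiveness of $\mathbb{P}_{h}$ in $\|\cdot\|_{\bar{0}}$, estimate $(\ref{36})$, the norm comparison of Step 2, and $(\ref{37})$, $\|e_{w,h}^{1}\|_{\bar{0}}\le \frac{\sigma^{2}}{2}\bigl(\tfrac{\widetilde C_{1}k_{\max}}{k_{\min}}\bigr)^{1/2}\||w_{2t}|\|_{B,\infty}+C_{1}h^{d+1}\|w^{1}\|_{V^{d+1}}$. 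Squaring, using $\|2e_{w,h}^{1}-e_{w,h}^{0}\|_{\bar{0}}^{2}\le 2(4\|e_{w,h}^{1}\|_{\bar{0}}^{2}+\|e_{w,h}^{0}\|_{\bar{0}}^{2})$, collecting the coefficient of $\sigma^{4}$ into one constant $\frac{\widetilde C_{1}k_{\max}}{k_{\min}}(\frac52\||w_{2t}|\|_{B,\infty}^{2}+\cdots\||w_{3t}|\|_{B,\infty}^{2})$ and that of $h^{2d+2}$ into $C_{1}^{2}(10\|w^{1}\|_{V^{d+1}}^{2}+3\|w_{0}\|_{V^{d+1}}^{2})$, and using $\sigma^{4}+h^{2d+2}\le(\sigma^{2}+h^{d+1})^{2}$ together with $\|e_{w,h}^{n+1}\|_{\bar{0}}^{2}\le\mathcal{E}^{n+1}$ and a supremum over $n$, I obtain $(\ref{44})$. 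Finally $(\ref{43})$ follows from $(\ref{44})$ by writing $w_{h}^{n+1}=e_{w,h}^{n+1}+w^{n+1}$, so $\|w_{h}^{n+1}\|_{\bar{0}}^{2}\le 2\|e_{w,h}^{n+1}\|_{\bar{0}}^{2}+2\||w|\|_{B,\infty}^{2}$ (again via the $\bar{0}$–$B$ comparison), together with $2\sigma\sum_{l=1}^{n}\|w_{h}^{l+1}-w^{l+1}\|_{B}^{2}=2\sigma\sum_{l=1}^{n}\|e_{w,h}^{l+1}\|_{B}^{2}$.

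\emph{Main obstacle.} Everything past Step 1 is the standard BDF2 ``$G$–stability'' mechanism plus careful bookkeeping of constants so as to land exactly on the right-hand sides of $(\ref{43})$–$(\ref{44})$; the coercivity and symmetry in Lemma $\ref{l1}$, the anti-symmetry in Lemma $\ref{l2}$, and inequalities $(\ref{36})$–$(\ref{39})$, $(\ref{41a})$ are the only nontrivial ingredients. The one genuinely delicate point is the pressure–coupling term $b(e_{w,h}^{n+1},\pi^{n+1})$: since $e_{w,h}^{n+1}$ is only \emph{discretely} solenoidal (against $Q_{h}$) while the exact pressure $p^{n+1}$ is merely $L^{2}$ in space, one must either work in a pointwise divergence–free velocity space $V_{h}^{0}$, so that the term vanishes outright, or control $b(w_{h}^{n+1},p^{n+1}-P_{h}p^{n+1})$ through $(\ref{29a})$ and a stability estimate for the pressure projection; handling this cleanly — and likewise verifying well-posedness of the one–step problem $(\ref{30})$–$(\ref{31})$ — is where the analysis requires the most care.
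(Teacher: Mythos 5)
Your proposal follows essentially the same route as the paper's proof: form the error equation by subtracting the consistency identity $(\ref{25})$ from the scheme $(\ref{30})$, test with the new error, kill the interface term by the anti-symmetry of $b_{I}$ (Lemma $\ref{l2}$), telescope the BDF2 difference quotient, control the truncation term with Cauchy--Schwarz, Young and the norm comparisons $(\ref{38})$, $(\ref{41a})$, $(\ref{55})$, bound the start-up errors via $(\ref{36})$--$(\ref{37})$, and recover the stability bound $(\ref{43})$ from the error bound $(\ref{44})$ by the triangle inequality. The differences are minor but worth recording. First, you invoke the standard G-stability identity with $\mathcal{E}^{k}=\|e_{w,h}^{k}\|_{\bar{0}}^{2}+\|2e_{w,h}^{k}-e_{w,h}^{k-1}\|_{\bar{0}}^{2}$ and drop the nonnegative square $\|e_{w,h}^{n+1}-2e_{w,h}^{n}+e_{w,h}^{n-1}\|_{\bar{0}}^{2}$, whereas the paper reaches an algebraically equivalent telescoping form through the longer elementary computation $(\ref{48})$--$(\ref{52})$; both yield the same summed inequality. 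Second, and more substantively, you are more careful than the paper about the pressure coupling: the paper simply asserts $b(w_{h}^{n+1},p^{n+1})=b(w^{n+1},p_{h}^{n+1})=0$ in $(\ref{47})$, which is not justified as stated, since $w_{h}^{n+1}$ is only discretely divergence-free against $Q_{h}$ while $p^{n+1}\notin Q_{h}$ in general. You correctly flag the surviving term $b(e_{w,h}^{n+1},p^{n+1}-P_{h}p^{n+1})$ as the delicate point; note, however, that your suggested absorption via the inf-sup condition $(\ref{29a})$ would, through the projection estimate $(\ref{37})$, contribute a term of order $h^{d}$ rather than $h^{d+1}$, so closing this step without degrading the claimed spatial rate requires either a pointwise divergence-free velocity subspace or a sharper duality/superconvergence argument --- a point that neither your sketch nor the paper fully resolves.
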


        \begin{proof}
        Subtracting equation $(\ref{25})$ from $(\ref{30})$, rearranging terms and replacing $w_{h}^{l}-w^{l}$ and $p_{h}^{l}-p^{l}$ with $e_{w,h}^{l}$ and $e_{p,h}^{l}$, for $l=n-1$, $n$ and $n+1$, respectively, result in
        \begin{equation*}
       \left(3e_{w,h}^{n+1}-4e_{w,h}^{n}+e_{w,h}^{n-1},z\right)_{\bar{0}}+2\sigma[B(e_{w,h}^{n+1},z)+b(z,e_{p,h}^{n+1})+b_{I}(e_{w,h}^{n+1},z)]=\frac{2\sigma^{3}}{3}\left(w_{3t}
       (\epsilon(t_{n+1})),z\right)_{\bar{0}},\text{\,\,\,}\forall z\in V.
       \end{equation*}

       For $z=2e_{w,h}^{n+1}$, this equation becomes
       \begin{equation*}
       \left(3e_{w,h}^{n+1}-4e_{w,h}^{n}+e_{w,h}^{n-1},2e_{w,h}^{n+1}\right)_{\bar{0}}+2\sigma[B(e_{w,h}^{n+1},2e_{w,h}^{n+1})+b(2e_{w,h}^{n+1},e_{p,h}^{n+1})+
       b_{I}(e_{w,h}^{n+1},2e_{w,h}^{n+1})]=
       \end{equation*}
       \begin{equation}\label{45}
       \frac{2\sigma^{3}}{3}\left(w_{3t}(\epsilon(t_{n+1})),2e_{w,h}^{n+1}\right)_{\bar{0}}.
       \end{equation}

       Since the bilinear operator $b_{I}(\cdot,\cdot)$ is anti-symmetric, it holds
       \begin{equation}\label{46}
       b_{I}(e_{w,h}^{n+1},2e_{w,h}^{n+1})=2b_{I}(e_{w,h}^{n+1},e_{w,h}^{n+1})=0.
       \end{equation}

       In addition, using equations $(\ref{26})$ and $(\ref{31})$ and the fact that $b(\cdot,\cdot)$ is a bilinear operator to get
       \begin{equation*}
       b(2e_{w,h}^{n+1},e_{p,h}^{n+1})=2b(e_{w,h}^{n+1},e_{p,h}^{n+1})=2b(w_{h}^{n+1}-w^{n+1},p_{h}^{n+1}-p^{n+1})=
       \end{equation*}
       \begin{equation}\label{47}
       2[b(w_{h}^{n+1},p_{h}^{n+1})-b(w_{h}^{n+1},p^{n+1})-b(w^{n+1},p_{h}^{n+1})+b(w^{n+1},p^{n+1})]=0,
       \end{equation}
       since $b(w_{h}^{n+1},p_{h}^{n+1})=b(w_{h}^{n+1},p^{n+1})=b(w^{n+1},p_{h}^{n+1})=b(w^{n+1},p^{n+1})=0$. Additionally, it is easy to observe that
       \begin{equation*}
       \left(3e_{w,h}^{n+1}-4e_{w,h}^{n}+e_{w,h}^{n-1},2e_{w,h}^{n+1}\right)_{\bar{0}}=\left(3e_{w,h}^{n+1}-4e_{w,h}^{n}+e_{w,h}^{n-1},e_{w,h}^{n+1}
       +e_{w,h}^{n-1}\right)_{\bar{0}}+
       \end{equation*}
       \begin{equation*}
       \left(3e_{w,h}^{n+1}-4e_{w,h}^{n}+e_{w,h}^{n-1},e_{w,h}^{n+1}-e_{w,h}^{n-1}\right)_{\bar{0}}.
       \end{equation*}

       Performing straightforward computations provide
       \begin{equation*}
       \left(3e_{w,h}^{n+1}-4e_{w,h}^{n}+e_{w,h}^{n-1},e_{w,h}^{n+1}+e_{w,h}^{n-1}\right)_{\bar{0}}=\left(3(e_{w,h}^{n+1}-e_{w,h}^{n})-(e_{w,h}^{n}-e_{w,h}^{n-1}),
       e_{w,h}^{n+1}+e_{w,h}^{n-1}\right)_{\bar{0}}=
       \end{equation*}
       \begin{equation*}
       3\left(e_{w,h}^{n+1}-e_{w,h}^{n},e_{w,h}^{n+1}\right)_{\bar{0}}+3\left(e_{w,h}^{n+1}-e_{w,h}^{n},e_{w,h}^{n-1}\right)_{\bar{0}}-
       \left(e_{w,h}^{n}-e_{w,h}^{n-1},e_{w,h}^{n+1}\right)_{\bar{0}}-\left(e_{w,h}^{n}-e_{w,h}^{n-1},e_{w,h}^{n-1}\right)_{\bar{0}}=
       \end{equation*}
       \begin{equation*}
       3\left(e_{w,h}^{n+1}-e_{w,h}^{n},e_{w,h}^{n+1}\right)_{\bar{0}}+\left(e_{w,h}^{n-1}-e_{w,h}^{n},e_{w,h}^{n-1}\right)_{\bar{0}}+
       3[\left(e_{w,h}^{n+1}-e_{w,h}^{n-1},e_{w,h}^{n-1}\right)_{\bar{0}}+\left(e_{w,h}^{n-1}-e_{w,h}^{n},e_{w,h}^{n-1}\right)_{\bar{0}}]-
       \end{equation*}
       \begin{equation}\label{48}
       [\left(e_{w,h}^{n}-e_{w,h}^{n+1},e_{w,h}^{n+1}\right)_{\bar{0}}+\left(e_{w,h}^{n+1}-e_{w,h}^{n-1},e_{w,h}^{n+1}\right)_{\bar{0}}].
       \end{equation}

       Since $\left(\cdot,\cdot\right)_{\bar{0}}$ is a scalar product on the space $V\times V\subset[(H^{1}(\Omega_{f}))^{m}\times H^{1}(\Omega_{f})]\times [(H^{1}(\Omega_{f}))^{m}\times H^{1}(\Omega_{f})]$, with corresponding norm $\|\cdot\|_{\bar{0}}$, so $\left(u_{1}\pm u_{2},u_{1}\right)_{\bar{0}}=\frac{1}{2}(\|u_{1}\pm u_{2}\|_{\bar{0}}^{2}+\|u_{1}\|_{\bar{0}}^{2}-\|u_{2}\|_{\bar{0}}^{2})$, for every vectors $u_{1},u_{2}\in V$. Utilizing this fact, equation $(\ref{48})$ becomes
       \begin{equation*}
       \left(3e_{w,h}^{n+1}-4e_{w,h}^{n}+e_{w,h}^{n-1},e_{w,h}^{n+1}+e_{w,h}^{n-1}\right)_{\bar{0}}=\frac{3}{2}\left(\|e_{w,h}^{n+1}-e_{w,h}^{n}\|_{\bar{0}}^{2}+
       \|e_{w,h}^{n+1}\|_{\bar{0}}^{2}-\|e_{w,h}^{n}\|_{\bar{0}}^{2}\right)+\frac{1}{2}\left(\|e_{w,h}^{n-1}-e_{w,h}^{n}\|_{\bar{0}}^{2}+\right.
       \end{equation*}
       \begin{equation*}
       \left.\|e_{w,h}^{n-1}\|_{\bar{0}}^{2}-\|e_{w,h}^{n}\|_{\bar{0}}^{2}\right)+\frac{3}{2}\left[-\left(\|e_{w,h}^{n-1}-e_{w,h}^{n+1}\|_{\bar{0}}^{2}+
       \|e_{w,h}^{n-1}\|_{\bar{0}}^{2}-\|e_{w,h}^{n+1}\|_{\bar{0}}^{2}\right)+\|e_{w,h}^{n-1}-e_{w,h}^{n}\|_{\bar{0}}^{2}+\|e_{w,h}^{n-1}\|_{\bar{0}}^{2}-
       \|e_{w,h}^{n}\|_{\bar{0}}^{2}\right]
       \end{equation*}
       \begin{equation*}
        +\frac{1}{2}\left[\|e_{w,h}^{n+1}-e_{w,h}^{n}\|_{\bar{0}}^{2}+\|e_{w,h}^{n+1}\|_{\bar{0}}^{2}-\|e_{w,h}^{n}\|_{\bar{0}}^{2}-
        \left(\|e_{w,h}^{n+1}-e_{w,h}^{n-1}\|_{\bar{0}}^{2}+\|e_{w,h}^{n+1}\|_{\bar{0}}^{2}-\|e_{w,h}^{n-1}\|_{\bar{0}}^{2}\right)\right]=
       \end{equation*}
       \begin{equation}\label{49}
        2(\|e_{w,h}^{n+1}-e_{w,h}^{n}\|_{\bar{0}}^{2}+\|e_{w,h}^{n-1}-e_{w,h}^{n}\|_{\bar{0}}^{2})-2\|e_{w,h}^{n+1}-e_{w,h}^{n-1}\|_{\bar{0}}^{2}+
       3\|e_{w,h}^{n+1}\|_{\bar{0}}^{2}-4\|e_{w,h}^{n}\|_{\bar{0}}^{2}+\|e_{w,h}^{n-1}\|_{\bar{0}}^{2}.
       \end{equation}

       In a similar manner, one easily shows that
       \begin{equation*}
       \left(3e_{w,h}^{n+1}-4e_{w,h}^{n}+e_{w,h}^{n-1},e_{w,h}^{n+1}-e_{w,h}^{n-1}\right)_{\bar{0}}=\frac{3}{2}\left(\|e_{w,h}^{n+1}-e_{w,h}^{n}\|_{\bar{0}}^{2}+
       \|e_{w,h}^{n+1}\|_{\bar{0}}^{2}-\|e_{w,h}^{n}\|_{\bar{0}}^{2}\right)-\frac{1}{2}\left(\|e_{w,h}^{n-1}-e_{w,h}^{n}\|_{\bar{0}}^{2}+\right.
       \end{equation*}
       \begin{equation*}
       \left.\|e_{w,h}^{n-1}\|_{\bar{0}}^{2}-\|e_{w,h}^{n}\|_{\bar{0}}^{2}\right)-\frac{3}{2}\left[-\left(\|e_{w,h}^{n-1}-e_{w,h}^{n+1}\|_{\bar{0}}^{2}+
       \|e_{w,h}^{n-1}\|_{\bar{0}}^{2}-\|e_{w,h}^{n+1}\|_{\bar{0}}^{2}\right)+\|e_{w,h}^{n-1}-e_{w,h}^{n}\|_{\bar{0}}^{2}+\|e_{w,h}^{n-1}\|_{\bar{0}}^{2}-
       \|e_{w,h}^{n}\|_{\bar{0}}^{2}\right]
       \end{equation*}
       \begin{equation*}
        -\frac{1}{2}\left[-\left(\|e_{w,h}^{n+1}-e_{w,h}^{n}\|_{\bar{0}}^{2}+\|e_{w,h}^{n+1}\|_{\bar{0}}^{2}-\|e_{w,h}^{n}\|_{\bar{0}}^{2}\right)+
        \left(\|e_{w,h}^{n+1}-e_{w,h}^{n-1}\|_{\bar{0}}^{2}+\|e_{w,h}^{n+1}\|_{\bar{0}}^{2}-\|e_{w,h}^{n-1}\|_{\bar{0}}^{2}\right)\right]=
       \end{equation*}
       \begin{equation}\label{50}
        2(\|e_{w,h}^{n+1}-e_{w,h}^{n}\|_{\bar{0}}^{2}-\|e_{w,h}^{n}-e_{w,h}^{n-1}\|_{\bar{0}}^{2})+\|e_{w,h}^{n+1}-e_{w,h}^{n-1}\|_{\bar{0}}^{2}.
       \end{equation}

        Combining equations $(\ref{49})$ and $(\ref{50})$, straightforward computations yield
        \begin{equation}\label{51}
        \left(3e_{w,h}^{n+1}-4e_{w,h}^{n}+e_{w,h}^{n-1},2e_{w,h}^{n+1}\right)_{\bar{0}}=4\|e_{w,h}^{n+1}-e_{w,h}^{n}\|_{\bar{0}}^{2}-\|e_{w,h}^{n+1}-
        e_{w,h}^{n-1}\|_{\bar{0}}^{2}+3\|e_{w,h}^{n+1}\|_{\bar{0}}^{2}-4\|e_{w,h}^{n}\|_{\bar{0}}^{2}+\|e_{w,h}^{n-1}\|_{\bar{0}}^{2}.
       \end{equation}

       But, $\|e_{w,h}^{n+1}-e_{w,h}^{n-1}\|_{\bar{0}}^{2}=\|(e_{w,h}^{n+1}-e_{w,h}^{n})+(e_{w,h}^{n}-e_{w,h}^{n-1})\|_{\bar{0}}^{2}\leq
       2(\|e_{w,h}^{n+1}-e_{w,h}^{n}\|_{\bar{0}}^{2}+\|e_{w,h}^{n}-e_{w,h}^{n-1}\|_{\bar{0}}^{2})$. Using this, it holds
       \begin{equation*}
        4\|e_{w,h}^{n+1}-e_{w,h}^{n}\|_{\bar{0}}^{2}-\|e_{w,h}^{n+1}-e_{w,h}^{n-1}\|_{\bar{0}}^{2}+3\|e_{w,h}^{n+1}\|_{\bar{0}}^{2}-4\|e_{w,h}^{n}\|_{\bar{0}}^{2}
        +\|e_{w,h}^{n-1}\|_{\bar{0}}^{2}\geq 2\|e_{w,h}^{n+1}-e_{w,h}^{n}\|_{\bar{0}}^{2}-2\|e_{w,h}^{n}-e_{w,h}^{n-1}\|_{\bar{0}}^{2}+
       \end{equation*}
       \begin{equation}\label{52}
        3\|e_{w,h}^{n+1}\|_{\bar{0}}^{2}-4\|e_{w,h}^{n}\|_{\bar{0}}^{2}+\|e_{w,h}^{n-1}\|_{\bar{0}}^{2}=2(\|e_{w,h}^{n+1}-e_{w,h}^{n}\|_{\bar{0}}^{2}-
        \|e_{w,h}^{n}-e_{w,h}^{n-1}\|_{\bar{0}}^{2})+3(\|e_{w,h}^{n+1}\|_{\bar{0}}^{2}-\|e_{w,h}^{n}\|_{\bar{0}}^{2})-(\|e_{w,h}^{n}\|_{\bar{0}}^{2}-
        \|e_{w,h}^{n-1}\|_{\bar{0}}^{2}).
       \end{equation}

       Substituting estimate $(\ref{52})$ into equation $(\ref{51})$, and utilizing equations $(\ref{45})$-$(\ref{47})$, we obtain
       \begin{equation*}
       2(\|e_{w,h}^{n+1}-e_{w,h}^{n}\|_{\bar{0}}^{2}-\|e_{w,h}^{n}-e_{w,h}^{n-1}\|_{\bar{0}}^{2})+3(\|e_{w,h}^{n+1}\|_{\bar{0}}^{2}-\|e_{w,h}^{n}\|_{\bar{0}}^{2})-
       (\|e_{w,h}^{n}\|_{\bar{0}}^{2}-\|e_{w,h}^{n-1}\|_{\bar{0}}^{2})+4\sigma B(e_{w,h}^{n+1},e_{w,h}^{n+1})\leq
       \end{equation*}
       \begin{equation}\label{53}
        \frac{4\sigma^{3}}{3}\left(w_{3t}(\epsilon(t_{n+1})),e_{w,h}^{n+1}\right)_{\bar{0}}.
       \end{equation}

       It follows from the second estimate in Lemma $\ref{l1}$ that
       \begin{equation}\label{54}
        B(e_{w,h}^{n+1},e_{w,h}^{n+1})=\|e_{w,h}^{n+1}\|_{B}^{2}\geq \frac{k_{\min}}{k_{\max}}\|e_{w,h}^{n+1}\|_{\nabla}^{2}.
       \end{equation}

       Additionally, using the Poincar\'{e}-Friedrich inequality $(\ref{38})$ and equation $(\ref{13})$, simple calculations give: for $z=(u,\psi)\in V$
       \begin{equation*}
       \|z\|_{\bar{0}}^{2}=\eta\|u\|_{0}^{2}+\rho gS_{0}\|\psi\|_{0}^{2}\leq \widetilde{C}(\eta\|u\|_{f}^{2}+\rho gS_{0}\|\psi\|_{p}^{2})=
       \widetilde{C}(\frac{1}{\nu}(\nu\eta)\|u\|_{f}^{2}+\rho gk_{\max}\frac{S_{0}}{k_{\max}}\|\psi\|_{p}^{2})\leq
       \end{equation*}
       \begin{equation}\label{55}
         \widetilde{C}\max\{\nu^{-1},S_{0}k_{\max}^{-1}\}(\nu\eta\|u\|_{f}^{2}+\rho gk_{\max}\|\psi\|_{p}^{2})=\widetilde{C}_{1}\|z\|_{\nabla}^{2},
       \end{equation}
       where $\widetilde{C}_{1}=\widetilde{C}\max\{\nu^{-1},S_{0}k_{\max}^{-1}\}$. Furthermore, utilizing the Cauchy-Schwarz inequality along with estimates $(\ref{54})$ and $(\ref{55})$, direct calculations provide
       \begin{equation*}
       \frac{4\sigma^{3}}{3}\left(w_{3t}(\epsilon(t_{n+1})),e_{w,h}^{n+1}\right)_{\bar{0}}\leq \frac{4\sigma^{3}}{3}\|e_{w,h}^{n+1}\|_{\bar{0}
       }\|w_{3t}(\epsilon(t_{n+1}))\|_{\bar{0}}\leq \frac{4\sigma^{3}}{3}\sqrt{\widetilde{C}_{1}}\|e_{w,h}^{n+1}\|_{\nabla}\|w_{3t}(\epsilon(t_{n+1}))\|_{\bar{0}}=
       \end{equation*}
       \begin{equation*}
       2\left(\sqrt{3\sigma \frac{k_{\min}}{k_{\max}}}\|e_{w,h}^{n+1}\|_{\nabla}\right)\left(2\sqrt{\frac{\widetilde{C}_{1}k_{\max}}{27k_{\min}}\sigma^{5}}
       \|w_{3t}(\epsilon(t_{n+1}))\|_{\bar{0}}\right)\leq \frac{3\sigma k_{\min}}{k_{\max}}\|e_{w,h}^{n+1}\|_{\nabla}^{2}+
       \end{equation*}
       \begin{equation}\label{56}
       \frac{4\widetilde{C}_{1}k_{\max}}{27k_{\min}}\sigma^{5}\|w_{3t}(\epsilon(t_{n+1}))\|_{\bar{0}}^{2}\leq 3\sigma \|e_{w,h}^{n+1}\|_{B}^{2}+\frac{4\widetilde{C}_{1}k_{\max}}{27k_{\min}}\sigma^{5}\|w_{3t}(\epsilon(t_{n+1}))\|_{\bar{0}}^{2}.
       \end{equation}

       Substituting estimate $(\ref{56})$ into inequality $(\ref{53})$ and after simplification to get
       \begin{equation*}
       2(\|e_{w,h}^{n+1}-e_{w,h}^{n}\|_{\bar{0}}^{2}-\|e_{w,h}^{n}-e_{w,h}^{n-1}\|_{\bar{0}}^{2})+3(\|e_{w,h}^{n+1}\|_{\bar{0}}^{2}-\|e_{w,h}^{n}\|_{\bar{0}}^{2})-
       (\|e_{w,h}^{n}\|_{\bar{0}}^{2}-\|e_{w,h}^{n-1}\|_{\bar{0}}^{2})+\sigma\|e_{w,h}^{n+1}\|_{B}^{2}\leq
       \end{equation*}
       \begin{equation}\label{57}
        \frac{4\widetilde{C}_{1}k_{\max}}{27k_{\min}}\sigma^{5}\|w_{3t}(\epsilon(t_{n+1}))\|_{\bar{0}}^{2}.
       \end{equation}

       But, inequality $(\ref{57})$ holds for any integer $l$ satisfying, $1\leq l\leq n$. Summing up estimate $(\ref{57})$, for $l=1,2,...,n-1$, and rearranging terms, this results in
        \begin{equation*}
       2\|e_{w,h}^{n+1}-e_{w,h}^{n}\|_{\bar{0}}^{2}+3\|e_{w,h}^{n+1}\|_{\bar{0}}^{2}-\|e_{w,h}^{n}\|_{\bar{0}}^{2}+\sigma\underset{l=1}{\overset{n}\sum} \|e_{w,h}^{l+1}\|_{B}^{2} \leq 2\|e_{w,h}^{1}-e_{w,h}^{0}\|_{\bar{0}}^{2}+ 3\|e_{w,h}^{1}\|_{\bar{0}}^{2}-\|e_{w,h}^{0}\|_{\bar{0}}^{2}+
       \end{equation*}
       \begin{equation}\label{58}
        \frac{4\widetilde{C}_{1}k_{\max}}{27k_{\min}}\sigma^{5}\underset{l=1}{\overset{n}\sum}\|w_{3t}(\epsilon(t_{n+1}))\|_{\bar{0}}^{2}
       \leq  2\|e_{w,h}^{1}-e_{w,h}^{0}\|_{\bar{0}}^{2}+
       3\|e_{w,h}^{1}\|_{\bar{0}}^{2}-\|e_{w,h}^{0}\|_{\bar{0}}^{2}+\frac{4\widetilde{C}_{1}k_{\max}}{27k_{\min}}n\sigma^{5}\||w_{3t}|\|_{\bar{0},\infty}^{2}.
       \end{equation}

       But $\|e_{w,h}^{n}\|_{\bar{0}}^{2}=\|e_{w,h}^{n}-e_{w,h}^{n+1}+e_{w,h}^{n+1}\|_{\bar{0}}^{2}\leq 2(\|e_{w,h}^{n}-e_{w,h}^{n+1}\|_{\bar{0}}^{2}+\|e_{w,h}^{n+1}\|_{\bar{0}}^{2})$. Thus,
       \begin{equation}\label{59}
        2\|e_{w,h}^{n+1}-e_{w,h}^{n}\|_{\bar{0}}^{2}+3\|e_{w,h}^{n+1}\|_{\bar{0}}^{2}-\|e_{w,h}^{n}\|_{\bar{0}}^{2}=\underset{\geq0}{\underbrace{2(\|e_{w,h}^{n}-e_{w,h}^{n+1}\|_{\bar{0}}^{2}+
       \|e_{w,h}^{n+1}\|_{\bar{0}}^{2})-\|e_{w,h}^{n}\|_{\bar{0}}^{2}}}+\|e_{w,h}^{n+1}\|_{\bar{0}}^{2} \geq \|e_{w,h}^{n+1}\|_{\bar{0}}^{2}.
       \end{equation}

       Since $\sigma=\frac{T}{N}$, so $n\sigma\leq T$. A combination of estimates $(\ref{58})$ and $(\ref{59})$ yields
       \begin{equation*}
       \|e_{w,h}^{n+1}\|_{\bar{0}}^{2}+\sigma\underset{l=1}{\overset{n}\sum} \|e_{w,h}^{l+1}\|_{B}^{2} \leq  2\|e_{w,h}^{1}-e_{w,h}^{0}\|_{\bar{0}}^{2}+
       3\|e_{w,h}^{1}\|_{\bar{0}}^{2}-\|e_{w,h}^{0}\|_{\bar{0}}^{2}+\frac{T\widetilde{C}_{1}k_{\max}}{9k_{\min}}\sigma^{4}\||w_{3t}|\|_{\bar{0},\infty}^{2}\leq
       \end{equation*}
       \begin{equation}\label{60}
        5\|e_{w,h}^{1}\|_{\bar{0}}^{2}+3\|e_{w,h}^{0}\|_{\bar{0}}^{2}+ \frac{4T\widetilde{C}_{1}k_{\max}}{24k_{\min}}\sigma^{4}\||w_{3t}|\|_{\bar{0},\infty}^{2}.
       \end{equation}

       It follows from the initial conditions $(\ref{5})$, $(\ref{32})$ and $(\ref{34})$ together with estimates $(\ref{36})$-$(\ref{37})$ that
       \begin{equation*}
       \|e_{w,h}^{1}\|_{\bar{0}}^{2}=\|w_{h}^{1}-w^{1}\|_{\bar{0}}^{2}=\|\mathbb{P}_{h}\overline{w}^{1}-w^{1}\|_{\bar{0}}^{2}=\|\mathbb{P}_{h}\overline{w}^{1}-
       \mathbb{P}_{h}w^{1}+\mathbb{P}_{h}w^{1}-w^{1}\|_{\bar{0}}^{2}\leq 2\|\mathbb{P}_{h}\overline{w}^{1}-\mathbb{P}_{h}w^{1}\|_{\bar{0}}^{2}+
       \end{equation*}
       \begin{equation}\label{61}
        2\|\mathbb{P}_{h}w^{1}-w^{1}\|_{\bar{0}}^{2}=2(\|\overline{w}^{1}-w^{1}\|_{\bar{0}}^{2}+\|\mathbb{P}_{h}w^{1}-w^{1}\|_{\bar{0}}^{2})\leq \frac{\sigma^{4}}{2}\||w_{2t}|\|_{\bar{0},\infty}^{2}+2C_{1}^{2}h^{2d+2}\|w_{0}\|_{V^{d+1}}^{2},
       \end{equation}
       \begin{equation}\label{62}
       \|e_{w,h}^{0}\|_{\bar{0}}^{2}=\|w_{h}^{0}-w^{0}\|_{\bar{0}}^{2}=\|\mathbb{P}_{h}w^{0}-w^{0}\|_{\bar{0}}^{2}\leq C_{1}^{2}h^{2d+2}\|w\|_{V^{d+1}}^{2}.
       \end{equation}

       Substituting inequalities $(\ref{61})$ and $(\ref{62})$ into estimate $(\ref{60})$ and rearranging terms, this gives
       \begin{equation*}
       \|e_{w,h}^{n+1}\|_{\bar{0}}^{2}+\sigma\underset{l=1}{\overset{n}\sum} \|e_{w,h}^{l+1}\|_{B}^{2} \leq \left(\frac{5}{2}\||w_{2t}|\|_{\bar{0},\infty}^{2}+
       \frac{4T\widetilde{C}_{1}k_{\max}}{27k_{\min}}\||w_{3t}|\|_{\bar{0},\infty}^{2}\right)\sigma^{4}+C_{1}^{2}\left(3\|w_{0}\|_{V^{d+1}}^{2}
       +10\|w^{1}\|_{V^{d+1}}^{2}\right)h^{2d+2}
       \end{equation*}
       \begin{equation}\label{62a}
       \leq\max\left\{\frac{5}{2}\||w_{2t}|\|_{\bar{0},\infty}^{2}+\frac{4T\widetilde{C}_{1}k_{\max}}{27k_{\min}}\||w_{3t}|\|_{\bar{0},\infty}^{2}, \text{\,\,}C_{1}^{2}\left(3\|w_{0}\|_{V^{d+1}}^{2}+10\|w^{1}\|_{V^{d+1}}^{2}\right)\right\}(\sigma^{2}+h^{d+1})^{2},
       \end{equation}
       which implies
       \begin{equation*}
       \|e_{w,h}^{n+1}\|_{\bar{0}}^{2}\leq \max\left\{\frac{5}{2}\||w_{2t}|\|_{\bar{0},\infty}^{2}+\frac{4T\widetilde{C}_{1}k_{\max}}{27k_{\min}}
       \||w_{3t}|\|_{\bar{0},\infty}^{2}, \text{\,\,}C_{1}^{2}\left(3\|w_{0}\|_{V^{d+1}}^{2}+10\|w^{1}\|_{V^{d+1}}^{2}\right)\right\}(\sigma^{2}+h^{d+1})^{2}-
       \end{equation*}
       \begin{equation*}
       \sigma\underset{l=1}{\overset{n}\sum}\|e_{w,h}^{l+1}\|_{B}^{2}
       \end{equation*}

       The square root in both sides of this estimate results in
       \begin{equation*}
       \|e_{w,h}^{n+1}\|_{\bar{0}} \leq \left(\max\left\{\frac{5}{2}\||w_{2t}|\|_{\bar{0},\infty}^{2}+\frac{4T\widetilde{C}_{1}k_{\max}}{27k_{\min}}
       \||w_{3t}|\|_{\bar{0},\infty}^{2}, \text{\,\,}C_{1}^{2}\left(3\|w_{0}\|_{V^{d+1}}^{2}+10\|w^{1}\|_{V^{d+1}}^{2}\right)\right\}(\sigma^{2}+h^{d+1})^{2}-\right.
       \end{equation*}
       \begin{equation}\label{63}
       \left.\sigma\underset{l=1}{\overset{n}\sum}\|e_{w,h}^{l+1}\|_{B}^{2}\right)^{\frac{1}{2}}.
       \end{equation}

       But $\|e_{w,h}^{n+1}\|_{\bar{0}}=\|w_{h}^{n+1}-w^{n+1}\|_{\bar{0}}\geq \|w_{h}^{n+1}\|_{\bar{0}}-\|w^{n+1}\|_{\bar{0}}$. This fact combined with estimate $(\ref{63})$ imply
       \begin{equation*}
       \|w_{h}^{n+1}\|_{\bar{0}}\leq \|w^{n+1}\|_{\bar{0}}+\left(\max\left\{\frac{5}{2}\||w_{2t}|\|_{\bar{0},\infty}^{2}+\frac{4T\widetilde{C}_{1}k_{\max}}{27k_{\min}}
       \||w_{3t}|\|_{\bar{0},\infty}^{2}, \text{\,\,}C_{1}^{2}\left(3\|w_{0}\|_{V^{d+1}}^{2}+10\|w^{1}\|_{V^{d+1}}^{2}\right)\right\}\times\right.
       \end{equation*}
       \begin{equation*}
       \left.(\sigma^{2}+h^{d+1})^{2}-\sigma\underset{l=1}{\overset{n}\sum}\|e_{w,h}^{l+1}\|_{B}^{2}\right)^{\frac{1}{2}}.
       \end{equation*}

       Squared both sides of this estimate, observing that $(a+b)^{2}\leq 2(a^{2}+b^{2})$, for two real numbers $a$ and $b$, and rearranging terms provide
       \begin{equation*}
       \|w_{h}^{n+1}\|_{\bar{0}}^{2}+2\sigma\underset{l=1}{\overset{n}\sum}\|e_{w,h}^{l+1}\|_{B}^{2}\leq 2\|w^{n+1}\|_{\bar{0}}^{2}+2\max\left\{\frac{5}{2}
       \||w_{2t}|\|_{\bar{0},\infty}^{2}+\frac{4T\widetilde{C}_{1}k_{\max}}{27k_{\min}}\||w_{3t}|\|_{\bar{0},\infty}^{2},\right.
       \end{equation*}
       \begin{equation}\label{64}
       \left.C_{1}^{2}\left(3\|w_{0}\|_{V^{d+1}}^{2}+10\|w^{1}\|_{V^{d+1}}^{2}\right)\right\}(\sigma^{2}+h^{d+1})^{2}.
       \end{equation}

       Taking the maximum over $t\in[0,\text{\,}T]$ in both inequalities $(\ref{62a})$ and $(\ref{64})$, the proof of Theorem $\ref{t1}$ is completed thanks to estimates $(\ref{41a})$, $(\ref{42})$ and $(\ref{55})$.
        \end{proof}

        \begin{remark}
        It's worth mentioning that the proof of unconditional stability and error estimates of the proposed approach $(\ref{30})$-$(\ref{32})$ and $(\ref{34})$ in a computed solution of the initial-boundary value problem $(\ref{1})$-$(\ref{6})$ has been established with neither the use of Gronwall inequality nor the addition of stabilization terms as widely discussed in the literature \cite{5en1,jkmt}.
        \end{remark}

         \section{Numerical experiments}\label{sec4}

       \text{\,\,\,\,\,\,\,\,\,\,} In this section, we perform some numerical examples to verify the unconditional stability, second-order accurate in time and spatial convergence of order $O(h^{d+1})$, where $d=3$, of the developed computational technique $(\ref{30})$-$(\ref{32})$ and $(\ref{34})$ for solving the nonstationary coupled groundwater-surface water problem $(\ref{1})$-$(\ref{6})$. Three tests dealing with some practical applications and corresponding to different values of $S_{0}$ and $\eta$ are discussed using the $L^{\infty}$-norms ($\||\cdot|\|_{\bar{0},\infty}$ and $\||\cdot|\|_{0,\infty}$) defined in equations $(\ref{11})$ and $(\ref{13})$: (i) $S_{0}=10^{-3}$ and $\eta=10^{-2}$, (ii) $S_{0}=10^{-7}$ and $\eta=10^{-2}$ and (iii) $S_{0}=10^{-10}$ and $\eta=10^{-1}$. Additionally, we analyze the stability and error estimates of the proposed computational technique using various time steps $\sigma=2^{-l}$, for $l\in\{4,5,6,7,8\}$, and grid spaces $h\in\{\frac{1}{2^{l}}, {\,\,for\,\,}l=2,3,4,5,6\}$. The finite element spaces are constructed by the quadrilateral Taylor-Hood element $Q_{2}/(Q_{1}+Q_{0})$ meshes in which $Q_{2}$ is used to approximate the velocity $w$ while we use $Q_{1}+Q_{0}$ to approximate the Stokes-pressure $p$. We compute the norm of the analytical solution ($w$), the approximate one ($w_{h}$) and the error terms: $e_{w,h}=w-w_{h}$ and $e_{p,h}=p-p_{h}$, using the norm, $\||\cdot|\|_{\theta,\infty}$, defined as

       \begin{equation*}
       \||z|\|_{\theta,\infty}=\underset{1\leq n\leq N}{\max}\|z^{n}\|_{\theta},
         \end{equation*}
       where $\|\cdot\|_{\theta}$, for $\theta\in\{\bar{0},\text{\,}0\}$, are the norms defined in equations $(\ref{11})$ and $(\ref{13})$, and $z\in V$ or $L_{0}^{2}(\Omega)$. Finally, the convergence order in space, $CO(h)$, of the new algorithm is estimated utilizing the formula
         \begin{equation*}
          CO(h)=\frac{\log\left(\frac{\||z_{2h}-z|\|_{\theta,\infty}}{\||z_{h}-z|\|_{\theta,\infty}}\right)}{\log(2)}.
         \end{equation*}

         Here, $z_{h}$ and $z_{2h}$ denote the spatial errors associated with the grid sizes $h$ and $2h$, respectively, while the temporal convergence order, $CO(\sigma)$, is computed using the formula
         \begin{equation*}
          CO(\sigma)=\frac{\log\left(\frac{\||z_{2\sigma}-z|\|_{\theta,\infty}}{\||z_{\sigma}-z|\|_{\theta,\infty}}\right)}{\log(2)},
         \end{equation*}
         where $z_{2\sigma}$ and $z_{\sigma}$ represent the temporal errors corresponding to time steps $2\sigma$ and $\sigma$, respectively. The numerical computations are carried out by the use of MatLab $R2007b$.\\

         Let $\Omega_{f}=(0,\text{\,}1)\times(1,\text{\,}2)$ be the fluid flow region, $\Omega_{p}=(0,\text{\,}1)^{2}$ be the porous media flow and $T=1$ be the final time. We assume that the external force $f=0$ and source term $g_{0}=0$, the initial and boundary conditions are chosen so that the analytical solution $((v_{1},v_{2}),p,\phi)^{T}$ is given in \cite{5en1} by
        \begin{eqnarray*}
         v_{1}(x,y,t) &=& \left(x^{2}(y-1)^{2}+y\right)\cos(t),\\
         v_{2}(x,y,t) &=& \left(\frac{2}{3}x(1-y)^{3}+2-\pi\sin(\pi x)\right)\cos(t),\\
         p(x,y,t) &=& \left(2-\pi\sin(\pi x)\right)\sin(0.5\pi y)\cos(t),\\
         \phi(x,y,t) &=& \left(2-\pi\sin(\pi x)\right)\left(1-y-\cos(\pi y)\right)\cos(t).
        \end{eqnarray*}

         $\bullet$ \textbf{Test $1$}. We take $S_{0}=10^{-3}$, $\eta=10^{-2}$, $k_{min}=10^{-2}$, $k_{\max}=1$, $\nu=10^{-1}$, $g=10$, $\rho=10^{3}$ and $T=1$.\\

         \textbf{Table 1.} Exact solution $w$, approximate one $w_{h}$ and convergence order $CO(\theta)$, where $\theta\in\{h,\sigma\}$, of the proposed hybrid higher-order computational technique with $S_{0}=10^{-3}$, $\eta=10^{-2}$, varying grid space $h$ and time step $\sigma$.
           \begin{equation*}
          \begin{array}{c c}
          \text{\,new algorithm,\,\,where\,\,}\sigma=2^{-6}& \\
           \begin{tabular}{cccccccc}
            \hline
            $h$ & $\||w|\|_{\bar{0},\infty}$ & $\||w_{h}|\|_{\bar{0},\infty}$ & $\||w_{h}-w|\|_{\bar{0},\infty}$ & $CO(h)$ & $\||p_{h}-p|\|_{0,\infty}$ & $CO(h)$ & CPU(s)\\
             \hline
            $2^{-2}$ & $3.0695$ & $3.0466$ & $2.0135\times10^{-2}$ & ....   &  $9.1103\times10^{-2}$ & .... & $2.6719$\\

            $2^{-3}$ & $2.3560$ & $2.3384$ & $1.2685\times10^{-3}$ & $3.9885$ & $2.2774\times10^{-2}$ & $2.0001$ & $7.8750$\\

            $2^{-4}$ & $2.0267$ & $2.0116$ & $8.0729\times10^{-5}$ & $3.9739$ & $5.5555\times10^{-3}$ & $2.0354$ & $26.3376$\\

            $2^{-5}$ & $1.8705$ & $1.8565$ & $5.0445\times10^{-6}$ & $4.0003$ & $1.3915\times10^{-3}$ & $1.9973$ & $51.5452$\\

            $2^{-6}$ & $1.7948$ & $1.7814$ & $2.9364\times10^{-7}$ & $4.1026$ & $3.2265\times10^{-4}$ & $2.1086$ & $126.7267$\\
            \hline
          \end{tabular} &
          \end{array}
          \end{equation*}

            \begin{equation*}
          \begin{array}{c c}
          \text{\,method discussed in \cite{11zd},\,\,where\,\,}\sigma=0.01& \\
           \begin{tabular}{ccccccc}
            \hline
            $h$ & $\|v_{h}^{n}-v^{n}\|_{0}$ & $\|v_{h}^{n}-v^{n}\|_{f}$ & $\|p_{h}^{n}-p^{n}\|_{0}$ & $\|\phi_{h}-\phi\|_{0}$ & $\|\phi_{h}-\phi\|_{p}$ & CPU(s)\\
             \hline
            $2^{-1}$ & $0.260588$ & $1.50020$ & $0.84999$ & $0.154615$ & $1.37567$ & $1.045$\\

            $2^{-2}$ & $0.070374$ & $0.80750$ & $0.47252$ & $0.047351$ & $0.79330$ & $3.058$\\

            $2^{-3}$ & $0.018036$ & $0.41540$ & $0.22371$ & $0.012932$ & $0.40959$ & $10.249$\\

            $2^{-4}$ & $0.004316$ & $0.18940$ & $0.07481$ & $0.003004$ & $0.19542$ & $38.454$\\

            $2^{-5}$ & $0.001095$ & $0.09588$ & $0.03597$ & $0.001097$ & $0.10070$ & $155.891$\\
            \hline
          \end{tabular} &
          \end{array}
          \end{equation*}

            \begin{equation*}
          \begin{array}{c c}
          \text{\,new algorithm,\,\,where\,\,}h=2^{-5}& \\
           \begin{tabular}{cccccccc}
            \hline
            $\sigma$ & $\||w_{h}-w|\|_{\bar{0},\infty}$ & $CO(\sigma)$ & $\||p_{h}-p|\|_{0,\infty}$ & $CO(\sigma)$ & CPU(s)\\
             \hline
            $2^{-4}$ & $5.3705\times10^{-2}$ & ....   & $4.6279\times10^{-1}$ & ....  & $43.7813$\\

            $2^{-5}$ & $1.3767\times10^{-2}$ & $1.9638$ & $2.2845\times10^{-1}$ & $1.0185$ & $89.6063$\\

            $2^{-6}$ & $3.4566\times10^{-3}$ & $1.9938$ & $1.1243\times10^{-1}$ & $1.0228$ & $173.9095$\\

            $2^{-7}$ & $8.5326\times10^{-4}$ & $2.0183$ & $5.6688\times10^{-2}$ & $0.9879$ & $342.9462$\\

            $2^{-8}$ & $1.9794\times10^{-4}$ & $2.1079$ & $2.6281\times10^{-2}$ & $1.1090$ & $734.0870$\\
            \hline
          \end{tabular} &
          \end{array}
          \end{equation*}

           The Tables show that the developed algorithm $(\ref{30})$-$(\ref{32})$ and $(\ref{34})$ is temporal second-order convergent and spatial fourth-order accurate in the Stokes-Darcy velocities $w=(v,\phi)$, first-order accurate in time and spatial second-order convergence in the stokes pressure $p$. Furthermore, one should observe from these Tables that the proposed approach is faster and more efficient than the method discussed in \cite{11zd} for solving the considered mixed Stokes-Darcy model $(\ref{1})$-$(\ref{6})$.\\

          $\bullet$ \textbf{Test $2$}. Assume $S_{0}=10^{-7}$, $\eta=10^{-2}$ and the other parameters are given in \textbf{Test 1.}\\

          \textbf{Table 2.} Analytical solution $w$, computed one $w_{h}$ and convergence order $CO(\theta)$ of the new algorithm with $S_{0}=10^{-7}$ and $\eta=10^{-2}$, with varying time step $\sigma$ and mesh space $h$.
          \begin{equation*}
          \begin{array}{c c}
          \text{\,new algorithm,\,\,where\,\,}\sigma=2^{-6}& \\
           \begin{tabular}{cccccccc}
            \hline
            $h$ & $\||w|\|_{\bar{0},\infty}$ & $\||w_{h}|\|_{\bar{0},\infty}$ & $\||w_{h}-w|\|_{\bar{0},\infty}$ & $CO(h)$ & $\||p_{h}-p|\|_{0,\infty}$ & $CO(h)$ & CPU(s)\\
             \hline
            $2^{-2}$ & $ 0.1731$ & $0.1718$ & $7.2473\times10^{-3}$ & ....   & $6.5217\times10^{-2}$ & .... & $18.4879$\\

            $2^{-3}$ & $0.1354$ & $0.1344$ & $4.5992\times10^{-4}$ & $3.9780$ & $1.6374\times10^{-2}$ & $1.9938$ & $35.6037$\\

            $2^{-4}$ & $0.1183$ & $0.1174$ & $2.8717\times10^{-5}$ & $4.0014$ & $4.0661\times10^{-3}$ & $2.0097$ & $65.6973$\\

            $2^{-5}$ & $0.1102$ & $0.1094$ & $1.7972\times10^{-6}$ & $3.9981$ & $9.4229\times10^{-4}$ & $2.1094$ & $125.7839$\\

            $2^{-6}$ & $0.1063$ & $0.1055$ & $1.1095\times10^{-7}$ & $4.0178$ & $2.1887\times10^{-4}$ & $2.1061$ & $221.0271$\\
            \hline
          \end{tabular} &
          \end{array}
          \end{equation*}
            \begin{equation*}
          \begin{array}{c c}
          \text{\,new algorithm,\,\,where\,\,}h=2^{-5}& \\
           \begin{tabular}{ccccccccc}
            \hline
            $\sigma$ & $\||w_{h}-w|\|_{\bar{0},\infty}$ & $CO(\sigma)$ & $\||p_{h}-p|\|_{0,\infty}$ & $CO(\sigma)$ & CPU(s)\\
             \hline
            $2^{-4}$ & $6.0032\times10^{-3}$ & ....   & $2.7190\times10^{-1}$ & .... &     $48.9976$\\

            $2^{-5}$ & $1.5128\times10^{-3}$ & $1.9885$ & $1.3575\times10^{-1}$ & $1.0021$ & $97.2319$\\

            $2^{-6}$ & $3.5859\times10^{-4}$ & $2.0768$ & $6.8172\times10^{-2}$ & $0.9937$ & $172.2719$\\

            $2^{-7}$ & $9.0102\times10^{-5}$ & $1.9927$ & $3.4084\times10^{-2}$ & $1.0001$ & $318.7684$\\

            $2^{-8}$ & $2.2232\times10^{-5}$ & $2.0189$ & $1.6243\times10^{-2}$ & $1.0693$ & $689.1342$\\
            \hline
          \end{tabular} &
          \end{array}
          \end{equation*}

            \begin{equation*}
          \begin{array}{c c}
          \text{\,method discussed in \cite{11zd},\,\,where\,\,}h=2^{-5}& \\
           \begin{tabular}{ccccccc}
            \hline
            $\sigma$ & $\|v_{h}^{n}-v^{n}\|_{0}$ & $\|v_{h}^{n}-v^{n}\|_{f}$ & $\|p_{h}^{n}-p^{n}\|_{0}$ & $\|\phi_{h}-\phi\|_{0}$ & $\|\phi_{h}-\phi\|_{p}$ & CPU(s)\\
             \hline
            $2^{-1}$ & $0.006344$ & $0.115244$ & $0.133226$ & $0.015296$ & $0.129335$ & $13.86$\\

            $2^{-2}$ & $0.003558$ & $0.102052$ & $0.085369$ & $0.008399$ & $0.109712$ & $28.05$\\

            $2^{-3}$ & $0.001974$ & $0.097474$ & $0.054458$ & $0.004556$ & $0.103108$ & $56.09$\\

            $2^{-4}$ & $0.001281$ & $0.096224$ & $0.040079$ & $0.002566$ & $0.101279$ & $112.39$\\

            $2^{-5}$ & $0.001089$ & $0.095932$ & $0.035125$ & $0.001577$ & $0.100810$ & $223.82$\\
            \hline
          \end{tabular} &
          \end{array}
          \end{equation*}

           We observe from the Tables that the proposed computational technique $(\ref{30})$-$(\ref{32})$ and $(\ref{34})$ is second-order convergent in time and spatial fourth-order accurate in the coupling Stokes-Darcy velocities $w=(v,\phi)$ while it's temporal first-order accurate and spatial second-order convergent in the Stokes pressure $p$. In addition, the Tables suggest that the proposed hybrid higher-order interpolation/finite element scheme is faster and more accurate than the method analyzed in \cite{11zd}.\\

          $\bullet$ \textbf{Test $3$}. We take $S_{0}=10^{-10}$ and $\eta=10^{-1}$. The parameters $\nu$, $g$, $\rho$, $k_{\max}$, $k_{\min}$ and $T$ are provided in \textbf{Tests 1 $\&$ 2.}\\

          \textbf{Table 3.} Analytical solution $w$, numerical one $w_{h}$ and convergence order $CO(\theta)$, of the developed numerical approach with $S_{0}=10^{-10}$ and $\eta=10^{-1}$, with different time step $\sigma$ and mesh space $h$.
          \begin{equation*}
          \begin{array}{c c}
          \text{\,new algorithm,\,\,where\,\,}\sigma=2^{-6}& \\
           \begin{tabular}{cccccccc}
            \hline
            $h$ & $\||w|\|_{\bar{0},\infty}$ & $\||w_{h}|\|_{\bar{0},\infty}$ & $\||w_{h}-w|\|_{\bar{0},\infty}$ & $CO(h)$ & $\||p_{h}-p|\|_{0,\infty}$ & $CO(h)$ & CPU(s)\\
             \hline
            $2^{-2}$ & $0.5387$ & $0.5347$ & $8.8453\times10^{-3}$  & ....   & $2.6639\times10^{-2}$ & .... &   $17.5222$\\

            $2^{-3}$ & $0.4217$ & $0.4185$ & $5.4905\times10^{-4}$  & $4.0099$ & $6.6713\times10^{-3}$ & $1.9975$ & $32.1980$\\

            $2^{-4}$ & $0.3686$ & $0.3658$ & $3.4833\times10^{-5}$  & $3.9784$ & $1.6932\times10^{-3}$ & $1.9782$ & $55.7796$\\

            $2^{-5}$ & $0.3435$ & $0.3410$ & $2.1771\times10^{-6}$  & $4.0000$ & $4.2330\times10^{-4}$ & $2.0000$ & $100.1051$\\

            $2^{-6}$ & $0.3314$ & $0.3289$ & $1.3496\times10^{-7}$  & $4.0118$ & $9.8036\times10^{-5}$ & $2.1103$ & $193.8695$\\
            \hline
          \end{tabular} &
          \end{array}
          \end{equation*}
            \begin{equation*}
          \begin{array}{c c}
          \text{\,new algorithm,\,\,where\,\,}h=2^{-5}& \\
           \begin{tabular}{ccccccccc}
            \hline
            $\sigma$ & $\||w_{h}-w|\|_{\bar{0},\infty}$ & $CO(\sigma)$ & $\||p_{h}-p|\|_{0,\infty}$ & $CO(\sigma)$ & CPU(s)\\
             \hline
            $2^{-4}$ & $1.8394\times10^{-2}$ & ....   & $3.1142\times10^{-1}$ & .... &   $51.2117$\\

            $2^{-5}$ & $4.6343\times10^{-3}$ & $1.9888$ & $1.6778\times10^{-1}$ & $0.8923$ & $99.5038$\\

            $2^{-6}$ & $1.1605\times10^{-3}$ & $1.9976$ & $8.9123\times10^{-2}$ & $0.9127$ & $189.8990$\\

            $2^{-7}$ & $2.8736\times10^{-4}$ & $2.0138$ & $4.4881\times10^{-2}$ & $0.9897$ & $336.0233$\\

            $2^{-8}$ & $6.6317\times10^{-5}$ & $2.1154$ & $2.2493\times10^{-2}$ & $0.9966$ & $689.6456$\\
            \hline
          \end{tabular} &
          \end{array}
          \end{equation*}

          \begin{equation*}
          \begin{array}{c c}
          \text{\,method analyzed in \cite{qhpl},\,\,errors\,\,}& \\
           \begin{tabular}{cccccc}
            \hline
            $\sigma=h$ & $\||e_{v}|\|_{0,0}$ & $\||e_{v}|\|_{0,1}$ & $\||e_{\phi}|\|_{0,0}$ & $\||e_{\phi}|\|_{0,1}$ & $\||e_{p}|\|_{0,0}$ \\
             \hline
            $1/10$ & $0.01615$   & $0.506002$ & $0.0146238$  & $0.551755$ & $0.138243$ \\

            $1/16$ & $0.00652393$ & $0.311263$& $0.00599802$  & $0.359655$ & $0.0637115$ \\

            $1/22$ & $0.00351853$ & $0.22917$ & $0.00324735$  & $0.268314$ & $0.04083$ \\

            $1/28$ & $0.00218086$ & $0.176397$& $0.00202875$  & $0.211693$ & $0.0260884$ \\

            $1/34$ & $0.00149633$ & $0.148517$ & $0.0013883$  & $0.177249$ & $0.0184629$ \\
            \hline
          \end{tabular} &
          \end{array}
          \end{equation*}
           \begin{equation*}
          \begin{array}{c c}
          \text{\,method analyzed in \cite{qhpl},\,\,convergence\,\,order}& \\
           \begin{tabular}{cccccc}
            \hline
            $\sigma=h$ & $r_{v,0}$ & $r_{v,1}$ & $r_{\phi,0}$ & $r_{\phi,1}$ & $r_{p,0}$ \\
             \hline
            $1/10$ & ---  & --- & ---  & --- & --- \\

            $1/16$ & $1.92895$ & $1.03382$  & $1.8962$   & $0.910541$ & $1.64818$ \\

            $1/22$ & $1.93885$ & $0.961447$ & $1.92678$  & $0.920035$ & $1.39722$ \\

            $1/28$ & $1.98342$ & $1.08527$  & $1.95063$  & $0.982834$ & $1.85737$ \\

            $1/34$ & $1.94019$ & $0.886068$ & $1.9538$  & $0.914617$ & $1.78066$ \\
            \hline
          \end{tabular} &
          \end{array}
          \end{equation*}

          Also in this Test, the Tables indicate that the proposed numerical approach $(\ref{30})$-$(\ref{32})$ and $(\ref{34})$ is temporal second-order convergent and spatial fourth-order convergent in the coupling Stokes-Darcy velocities $w=(v,\phi)$ whereas the approach is first-order accurate in time and second-order convergent in space in the Stokes pressure $p$. In addition, the computational results show that the developed procedure is faster and more efficient than the method discussed in \cite{qhpl}.\\

           Finally, it follows from \textbf{Tables} $1$-$3$ and Figures $\ref{fig2}$-$\ref{fig4}$ that the developed hybrid higher-order interpolation/finite element technique applied to the evolutionary coupled groundwater-surface water problem $(\ref{1})$-$(\ref{6})$ is unconditionally stable, temporal second-order accurate and spatial fourth-order convergent in the Stokes-Darcy velocities $w=(v,\phi)$ while the new algorithm is temporal first-order convergent and spatial second-order accurate in the Stokes pressure $p$, faster and more efficient than a broad range of numerical methods studied in the literature \cite{11zd,5en1,en3,qhpl,11zd} for solving the initial-boundary value problem $(\ref{1})$-$(\ref{6})$.

         \section{General conclusion and future works}\label{sec5}

         \text{\,\,\,\,\,\,\,\,\,\,} In this paper, we have developed a hybrid higher-order unconditionally stable interpolation/finite element method $(\ref{30})$-$(\ref{32})$ and $(\ref{34})$ for solving the unsteady mixed Stokes-Darcy model $(\ref{1})$-$(\ref{6})$ associated with a non diagonal symmetric positive definite hydraulic conductivity tensor $\mathcal{K}$. Both stability and error estimates of the proposed numerical approach have been analyzed using the $L^{\infty}(0,T;\text{\,}L^{2})$-norm. The theory has suggested that the new algorithm is unconditionally stable, second-order accurate in time and spatial convergent of order $O(h^{d+1})$ in the Stokes-Darcy velocities $w=(v,\phi)$ for any values of the volumetric porosity $\eta$ and specific mass storativity $S_{0}$ whereas the numerical experiments have confirmed the theoretical studies and have indicated that the new computational technique is temporal first-order convergent and spatial second-order accurate in the Stokes pressure $p$. Specifically, \textbf{Tables} $1$-$3$ and \textbf{Figures} $\ref{fig2}$-$\ref{fig4}$ confirm the unconditional stability and accuracy of the developed hybrid higher-order numerical scheme $(\ref{30})$-$(\ref{32})$ and $(\ref{34})$. Additionally, both theoretical and computational results show that the new algorithm is faster and more efficient than a broad range of methods deeply analyzed in the literature \cite{11zd,5en1,en3,qhpl,11zd}, in an approximate solution of the given evolutionary coupled groundwater-surface water problem $(\ref{1})$-$(\ref{3})$ and can be considered as a robust tool for solving general systems of multi-physic model. Our future works will develop more efficient computational techniques for solving the evolutionary mixed Stokes-Darcy problem together with some nonlinear multi-physic problems arising in geological structural deformation such as the three-dimensional system of nonlinear unsteady elastodynamic sine-Gordon model.

         \subsection*{Ethical Approval}
          Not applicable.
         \subsection*{Availability of supporting data}
          Not applicable.
         \subsection*{Declaration of Interest Statement}
          The authors declare that they have no conflict of interests.
         \subsection*{Authors' contributions}
          The whole work has been carried out by the authors.
         \subsection*{Funding}
         This work was supported and funded by the Deanship of Scientific Research at Imam Mohammad Ibn Saud Islamic University (IMSIU) (grant number IMSIU-DDRSP-NS25).
          \text{\,}\\

         \begin{figure}
         \begin{center}
          Stability analysis of the developed hybrid higher-order technique for mixed Stokes-Darcy model.
          \begin{tabular}{c c}
          \psfig{file=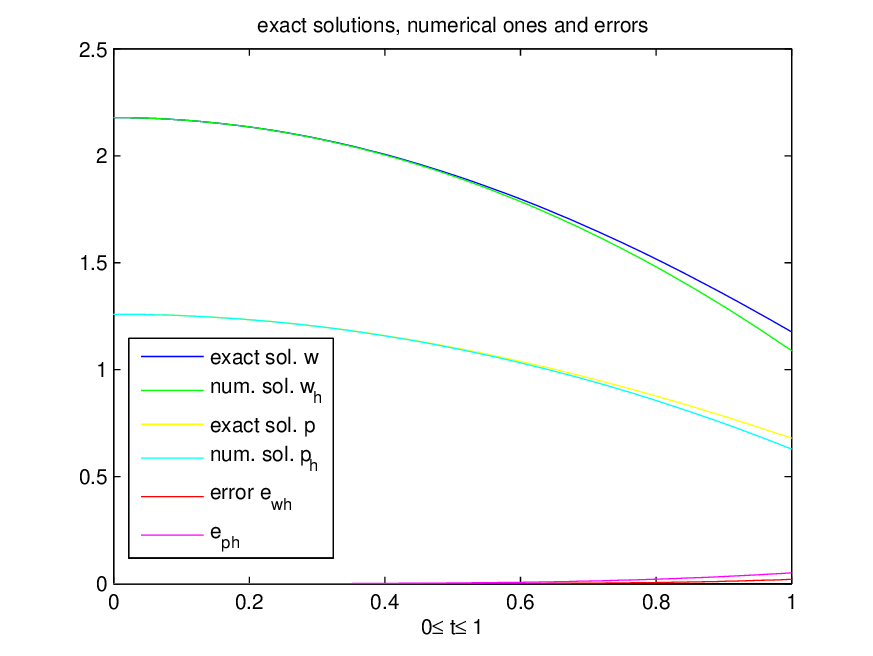,width=6cm} & \psfig{file=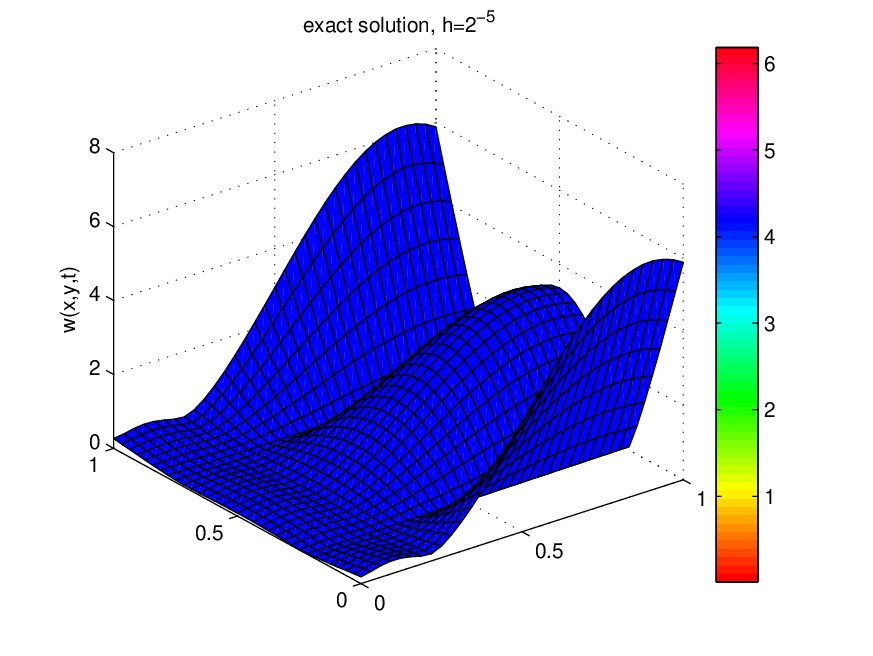,width=6cm}\\
          \psfig{file=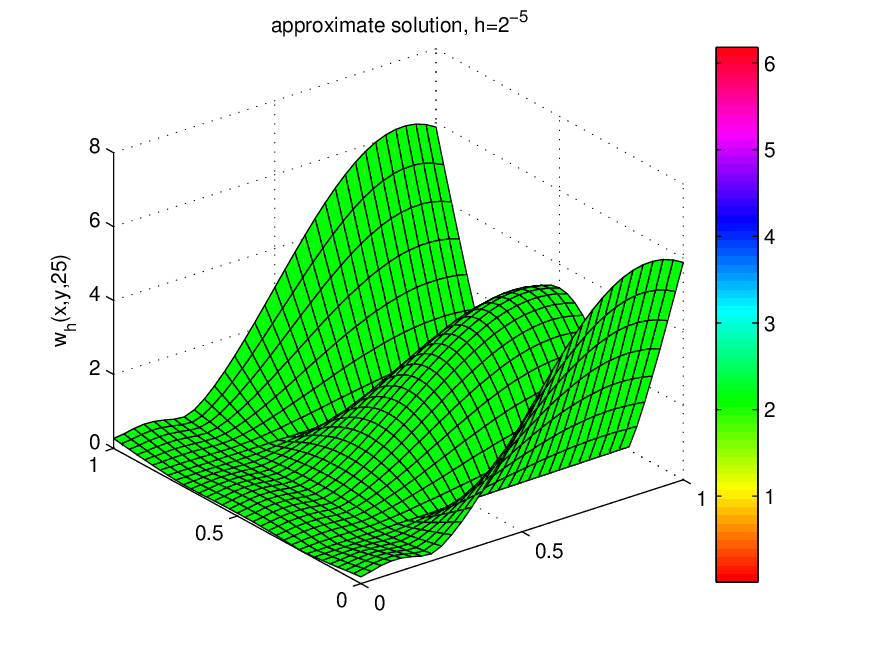,width=6cm} & \psfig{file=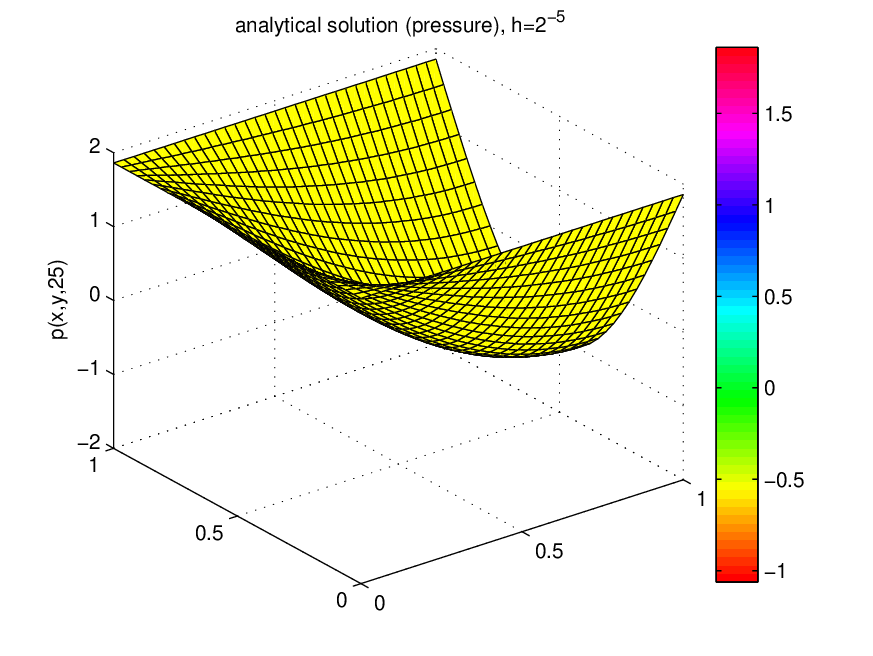,width=6cm}\\
          \psfig{file=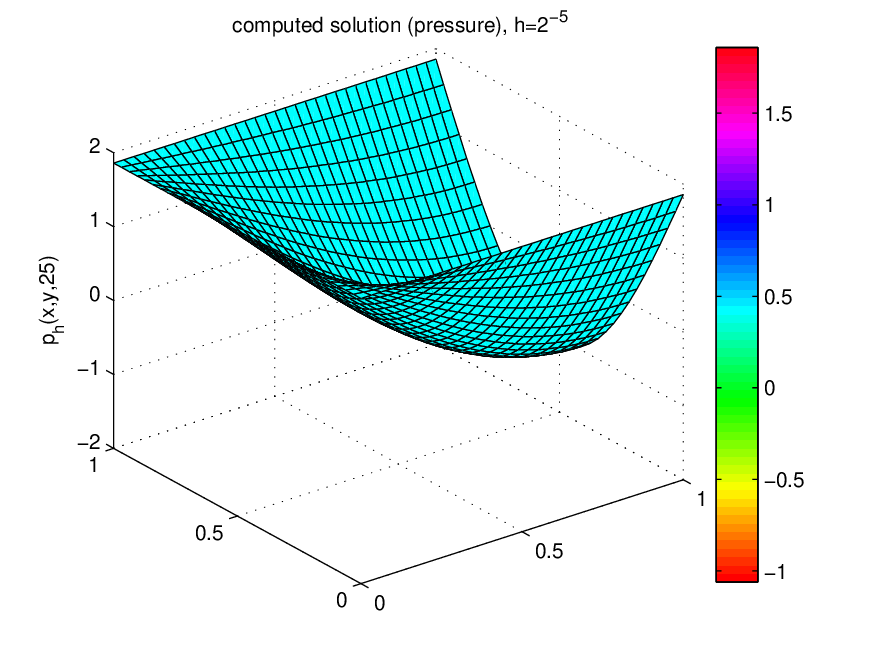,width=6cm} & \psfig{file=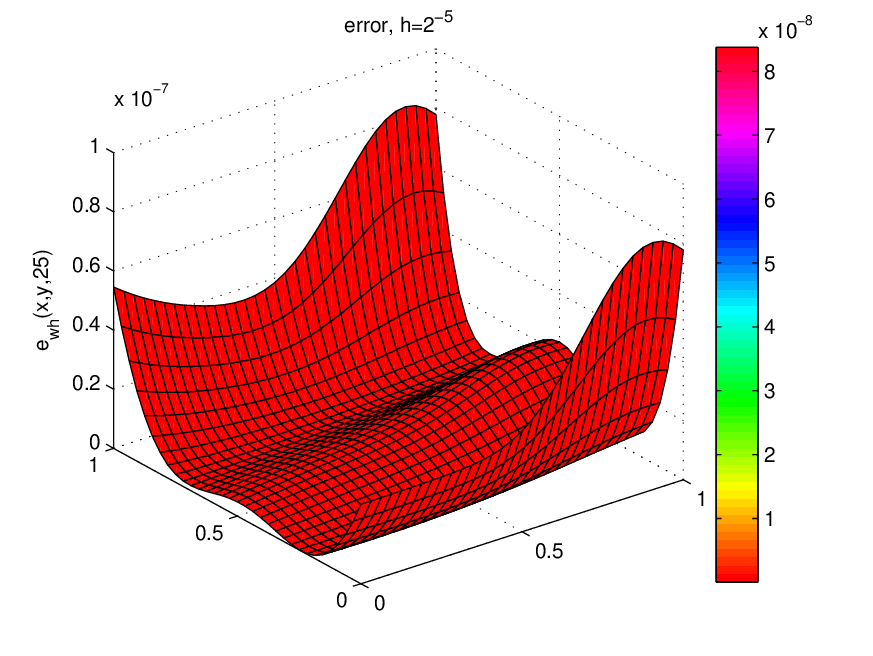,width=6cm}\\
          \psfig{file=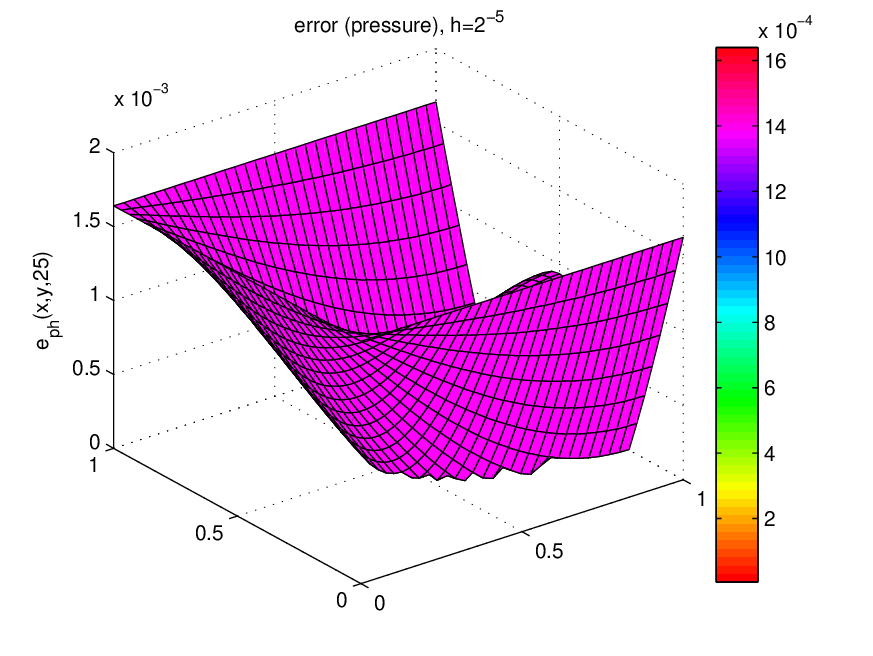,width=6cm} &                                                     \\
         \end{tabular}
        \end{center}
         \caption{corresponding to \textbf{Table 1} ($S_{0}=10^{-3}$ and $\eta=10^{-2}$)}
          \label{fig2}
          \end{figure}

          \begin{figure}
         \begin{center}
          Analysis of stability of the new algorithm applied to coupled groundwater-surface water problem.
          \begin{tabular}{c c}
          \psfig{file=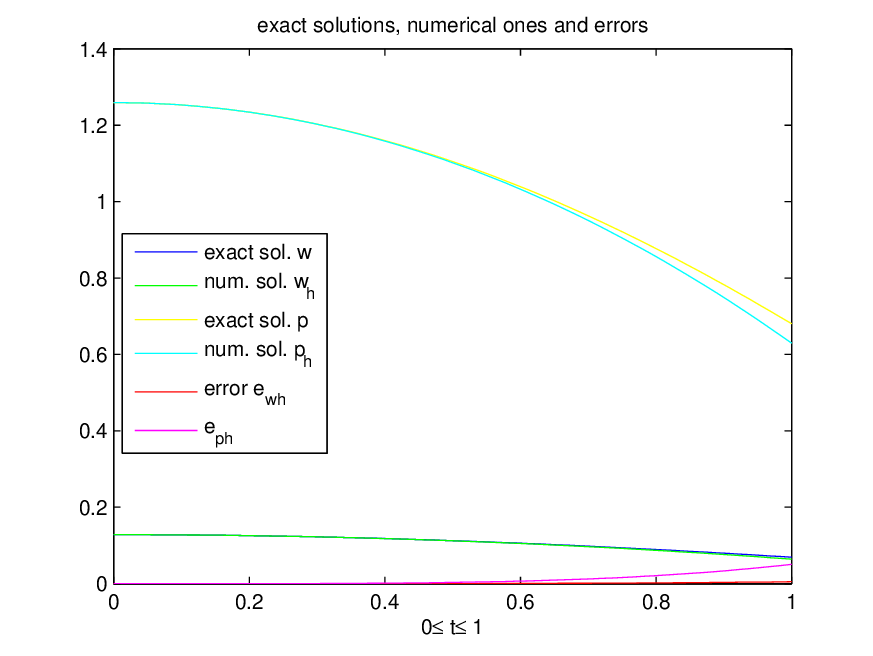,width=6cm} & \psfig{file=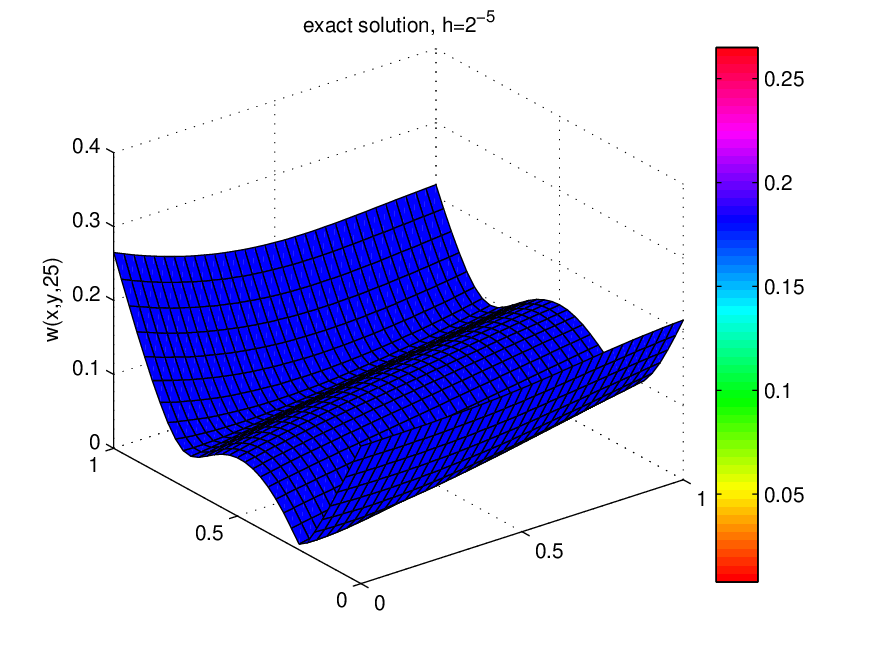,width=6cm}\\
          \psfig{file=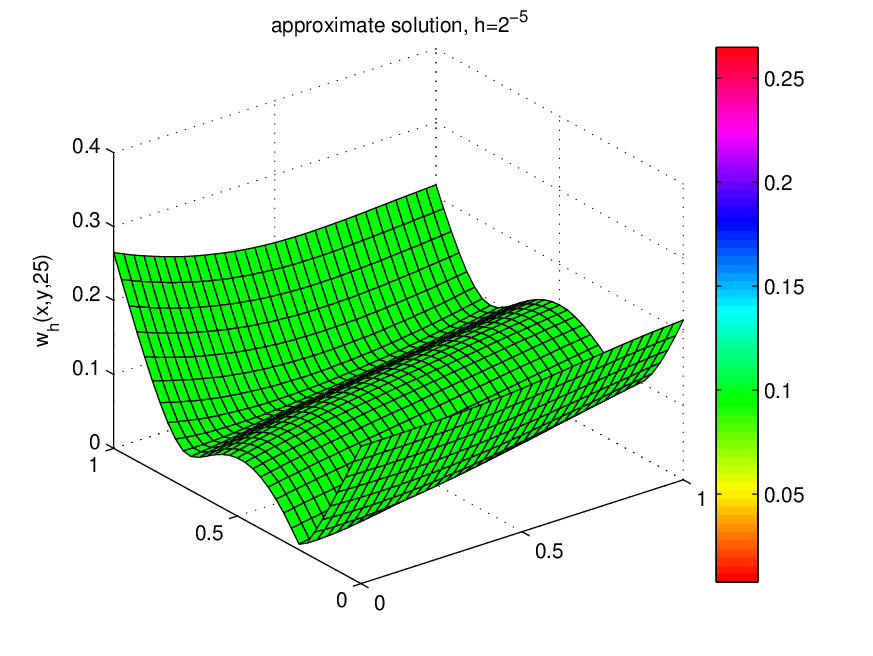,width=6cm} & \psfig{file=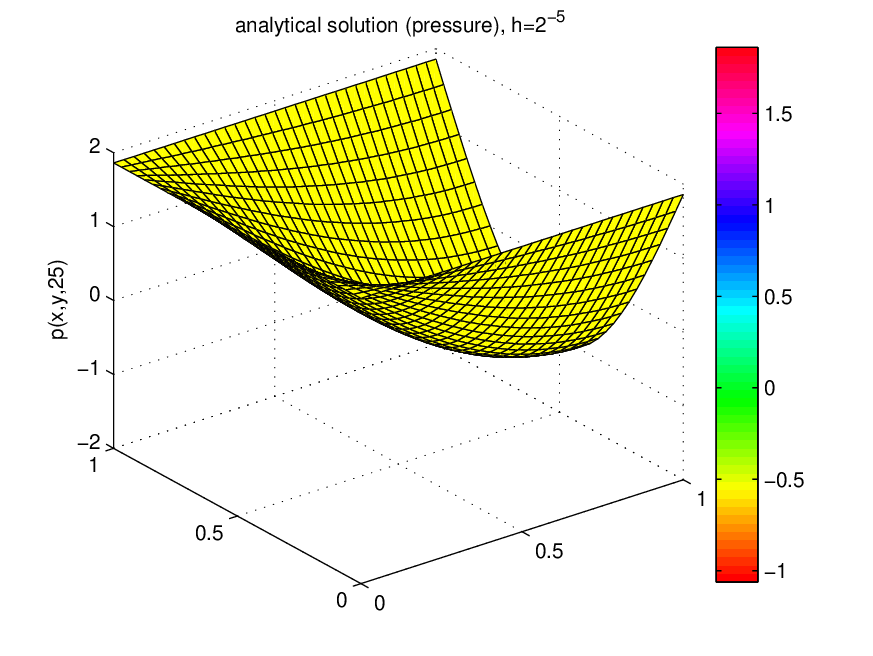,width=6cm}\\
          \psfig{file=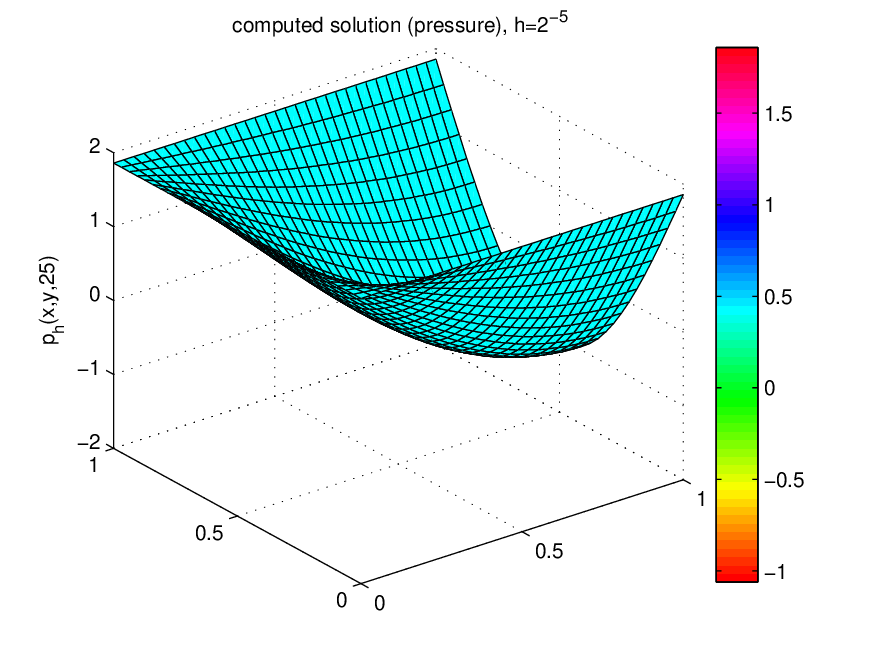,width=6cm} & \psfig{file=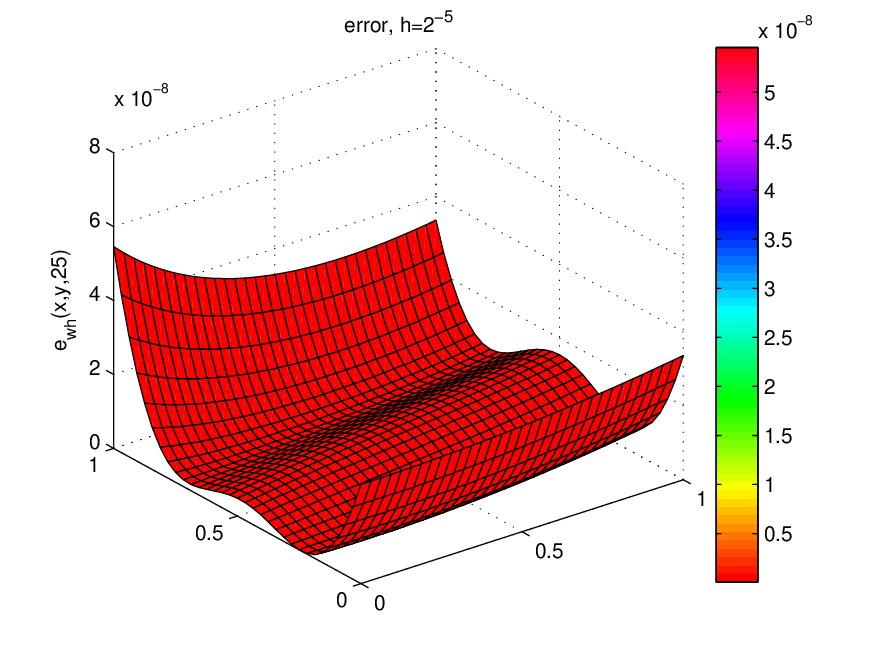,width=6cm}\\
          \psfig{file=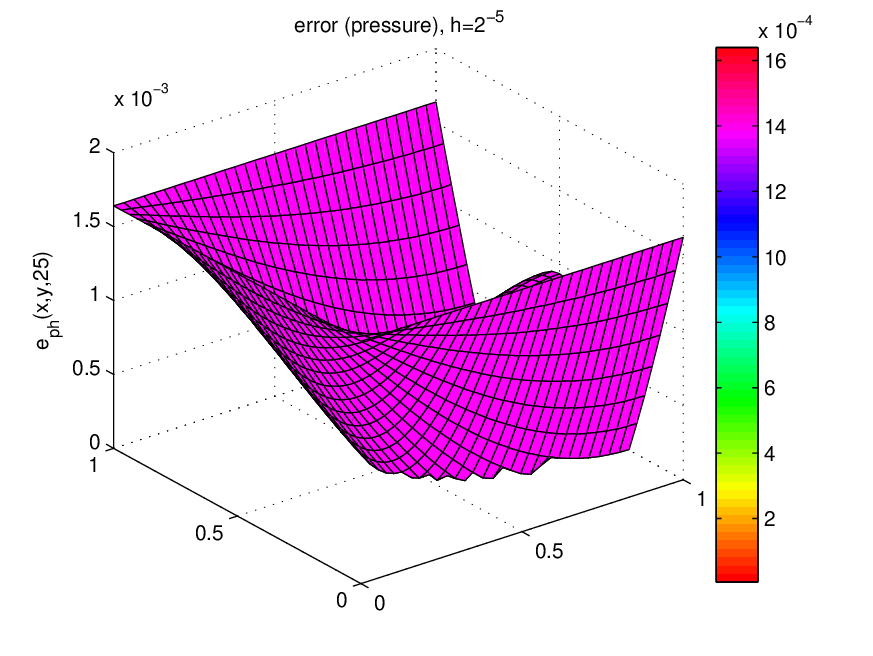,width=6cm} &  \\
         \end{tabular}
        \end{center}
        \caption{associated with \textbf{Table 2} ($S_{0}=10^{-7}$ and $\eta=10^{-2}$)}
          \label{fig3}
          \end{figure}

          \begin{figure}
         \begin{center}
          Stability analysis of the proposed hybrid higher-order interpolation/finite element approach.
          \begin{tabular}{c c}
          \psfig{file=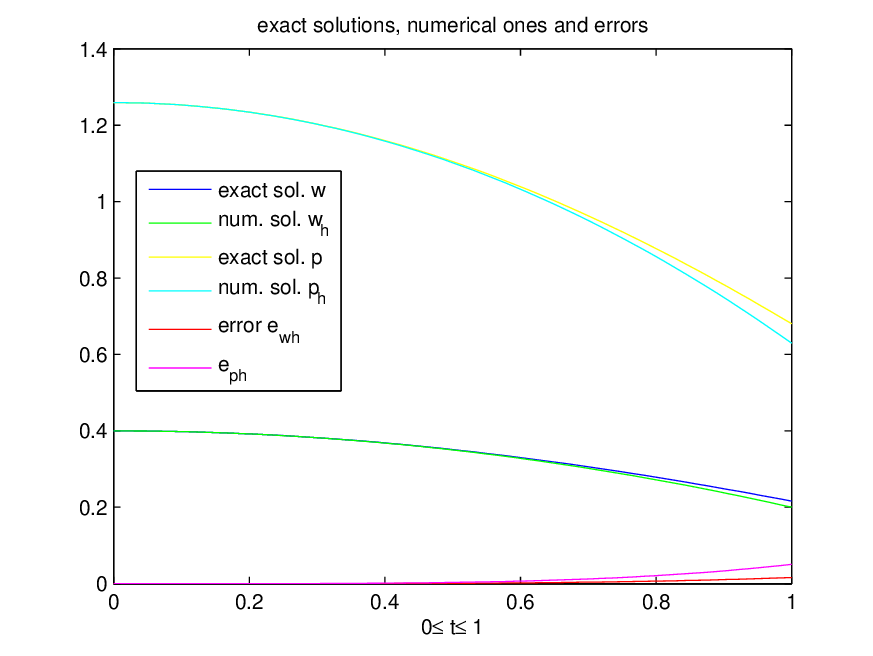,width=6cm} & \psfig{file=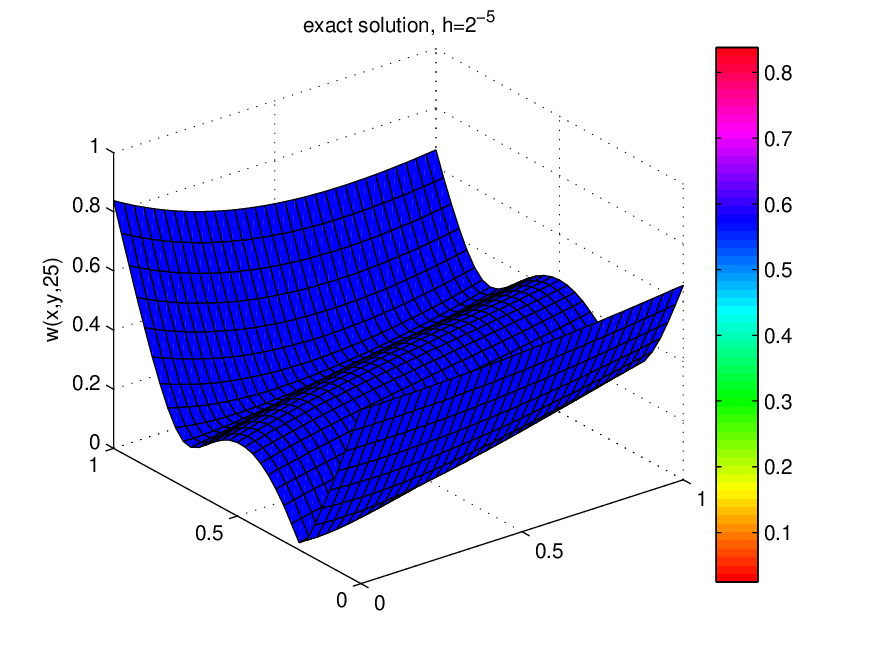,width=6cm}\\
          \psfig{file=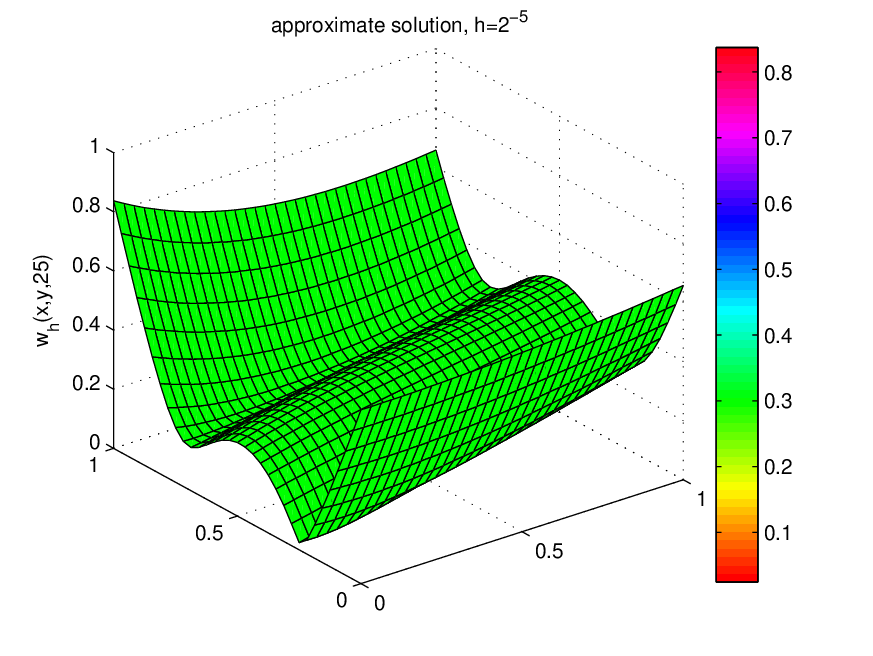,width=6cm} & \psfig{file=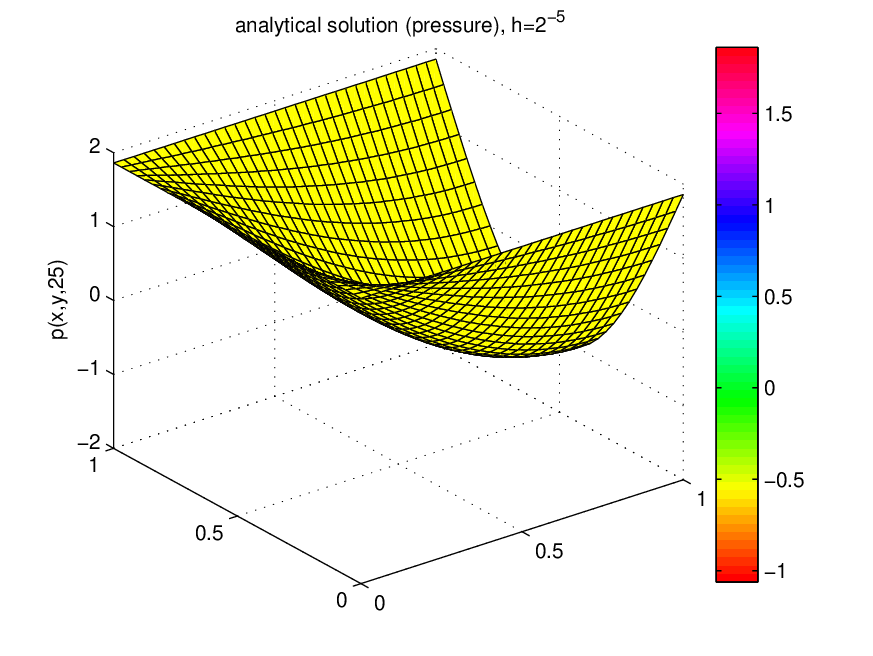,width=6cm}\\
          \psfig{file=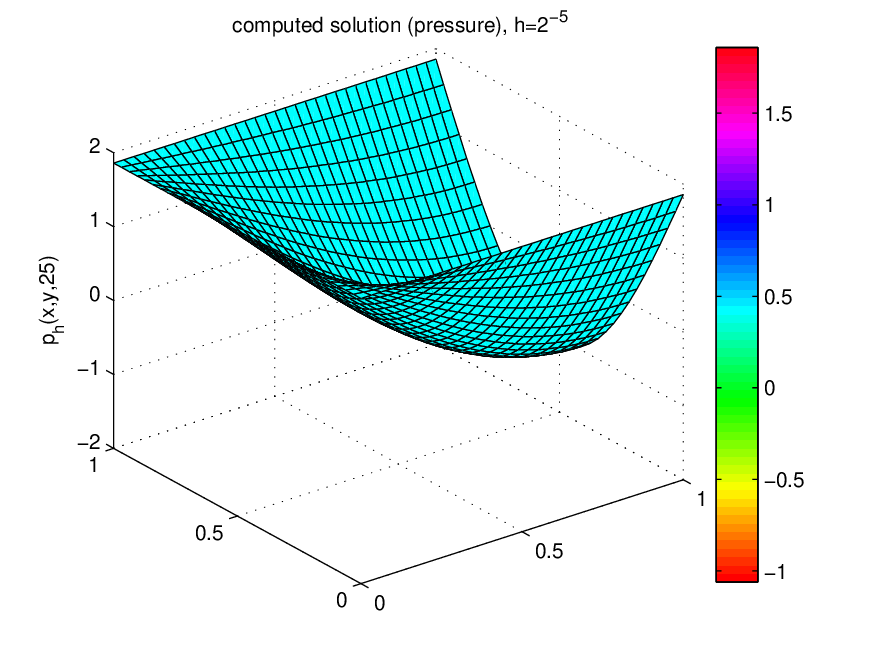,width=6cm} & \psfig{file=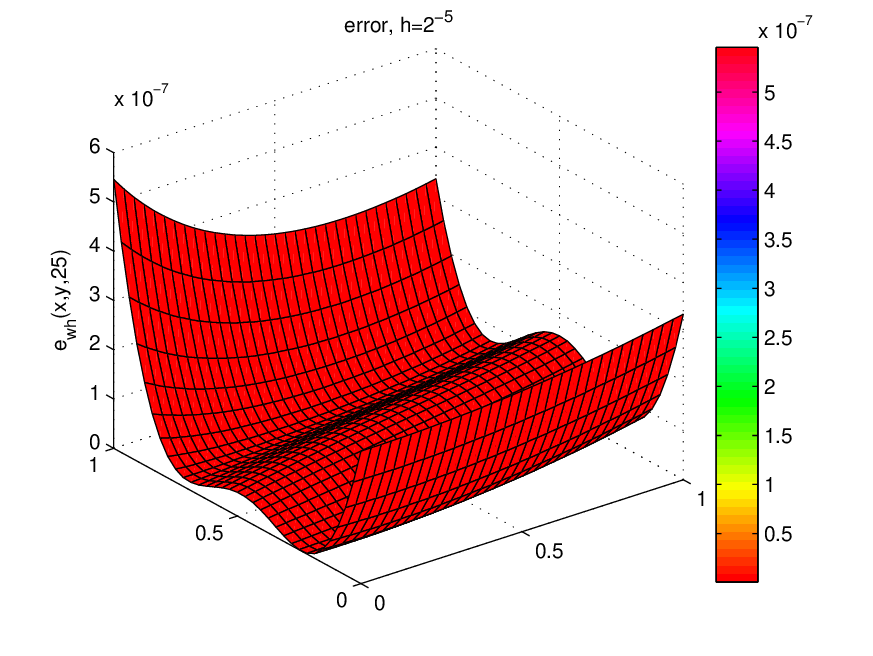,width=6cm}\\
          \psfig{file=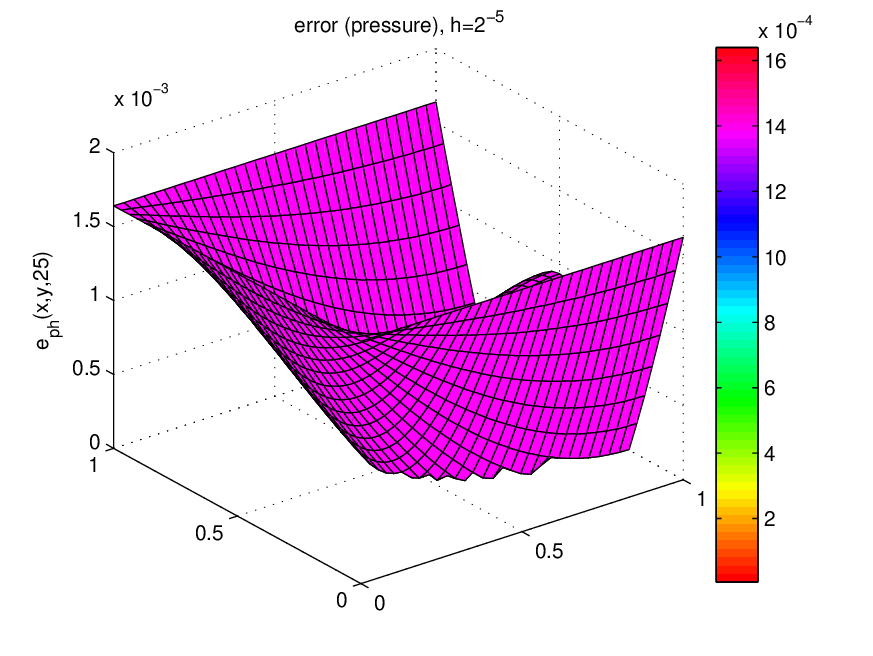,width=6cm} & \\
         \end{tabular}
        \end{center}
         \caption{corresponding to \textbf{Table 3} ($S_{0}=10^{-10}$ and $\eta=10^{-1}$)}
          \label{fig4}
          \end{figure}

          \newpage

     \end{document}